\DeclareMathAlphabet{\mathcal}{OMS}{cmsy}{m}{n} 
\newcommand{\sk}{\smallskip}
\newcommand{\mk}{\medskip}
\newcommand{\bk}{\bigskip}
\renewcommand{\emptyset}{\ensuremath{\varnothing}}     
\newcommand{\G}{\mathbf{G}}
\newcommand{\B}{\mathbf{B}}
\newcommand{\U}{\mathbf{U}}
\newcommand{\V}{\mathbf{V}}
\newcommand{\T}{\mathbf{T}}
\renewcommand{\P}{\mathbf{P}}
\renewcommand{\L}{\mathbf{L}}
\newlength{\leftlength}
\newlength{\rightlength}
\newlength{\calculskip}
\newcommand{\calculvskip}[1]{%
  \ifthenelse{#1 = 0}{\setlength{\calculskip}{0pt}}{}%
  \ifthenelse{#1 = 1}{\setlength{\calculskip}{\smallskipamount}}{}%
  \ifthenelse{#1 = 2}{\setlength{\calculskip}{\medskipamount}}{}%
  \ifthenelse{#1 = 3}{\setlength{\calculskip}{\bigskipamount}}{}%
  \ifthenelse{#1 = 4}{\setlength{\calculskip}{1cm}}{}%
  \vskip\calculskip
}
\newcommand{\leftcentersright}[4][2]{%
        \settowidth{\leftlength}{#2}%
        \settowidth{\rightlength}{#4}%
        \calculvskip{#1}
        \noindent#2\hskip-\leftlength%
        \hfill#3\hfill
        \mbox{}\hskip-\rightlength#4%
        \vskip\calculskip%
        }
\newcommand{\centers}[2][2]{\leftcentersright[#1]{}{#2}{}}
\newcommand{\leftcenters}[3][2]{\leftcentersright[#1]{#2}{#3}{}}
\def\svhline{%
  \noalign{\ifnum0=`}\fi\hrule \@height2\arrayrulewidth \futurelet
   \reserved@a\@xhline}
\def\hlinewd#1{%
\noalign{\ifnum0=`}\fi\hrule \@height #1 %
\futurelet\reserved@a\@xhline}
\numberwithin{equation}{section}
\newtheorem{prop}[equation]{Proposition}  
\newtheorem{thm}[equation]{Theorem}
\newtheorem{lem}[equation]{Lemma}  
\newtheorem{cor}[equation]{Corollary}
\newtheorem*{thm2}{Theorem}
\newtheorem*{conjHLM}{Conjecture HLM}
\theoremstyle{definition}
\newtheorem{rmk}[equation]{Remark}
\newcommand{\X}{\mathrm{X}}
\newcommand{\Y}{\mathrm{Y}}
\newcommand{\Hc}{\mathrm{H}_c}
\newcommand{\Rgc}{\mathrm{R}\Gamma_c}
\newcommand{\ol}{\mathop{\otimes}\limits^\mathrm{L}}
\newcommand\iso{\ \widetilde \to\ }
\begin{document}

\title{Coxeter orbits and Brauer trees II}
\author{Olivier Dudas\footnote{Institut de Math\'ematiques de Jussieu, Paris.}
\footnote{At the time this paper was written,  the author was supported by the EPSRC, Project No EP/H026568/1, by Magdalen College, Oxford and partly by the ANR, Project No JC07-192339.}}

\maketitle

\begin{abstract} The purpose of this paper is to discuss the validity of the assumptions $\mathrm{(W)}$ and $\mathrm{(S)}$ stated in \cite{Du3}, about the torsion in the modular $\ell$-adic cohomology of Deligne-Lusztig varieties associated with Coxeter elements. We prove that both $\mathrm{(W)}$ and $\mathrm{(S)}$ hold except for groups of type E$_7$ or E$_8$.
\end{abstract}

\section*{Introduction}

Let $\G$ be a quasi-simple algebraic group defined over an algebraic closure of a finite field of characteristic $p$. Let $F$ be the Frobenius endomorphism of $\G$ associated with a  rational $\mathbb{F}_q$-structure. The finite group $G = \G^F$ of fixed points under $F$ is called a finite reductive group.

\sk

Let $\ell$ be a prime number different from $p$ and $\Lambda$ be a finite extension of $\mathbb{Z}_\ell$. There is strong evidence that the structure of  the principal $\ell$-block of $G$ is encoded in the cohomology over $\Lambda$ of some Deligne-Lusztig variety. Precise conjectures have been stated in \cite{BMa2} and \cite{BMi2}, and much numerical evidence has been collected. The representation theory of $\Lambda G$ is highly dependent on the prime number $\ell$. 
In \cite{Du3}, we have studied a special case referred to as the \emph{Coxeter case}. The corresponding  primes $\ell$ are those which divide the cyclotomic polynomial $\Phi_h(q)$ where  $h$ is the Coxeter number of $W$. In that situation, it is to be expected that the cohomology of the Deligne-Lusztig variety $\Y(\dot c)$ associated with a Coxeter element $c$ describes the principal $\ell$-block $b\Lambda G$. More precisely, 

\begin{itemize}

\item \emph{Hi\ss-L\"ubeck-Malle conjecture}: the Brauer tree of $b\Lambda G$ (which has a cyclic defect group) can be recovered from the action of $G$ and some power $F^\delta$ of $F$ on the cohomology groups $\Hc^i(\Y(\dot c),\overline{\mathbb{Q}}_\ell)$  \cite{HLM};

\item \emph{Geometric version of Brou\'e's conjecture}: the complex $b\Rgc(\Y(\dot c),\Lambda)$ induces a derived equivalence between the principal $\ell$-blocks of $G$ and the normalizer $N_G(T_c)$ of a torus  of type $c$ \cite{BMa2}.

\end{itemize}

\noindent In \cite{Du3} the author has given a general proof of both of these conjectures, but under some assumptions on the torsion in the cohomology of $\Y(\dot c)$. The weaker assumption concerns only some eigenspaces of the Frobenius:

\centers{\begin{tabular}{cp{13cm}} \hskip-1mm $\mathbf{(W)}$ &  For all minimal eigenvalues $\lambda$ of  $F^\delta$,  the generalized $(\lambda)$-eigenspace of $F^\delta$ on $b\Hc^\bullet(\Y(\dot c),\Lambda)$ is torsion-free. \end{tabular}}

\noindent We call here "minimal" the eigenvalues of $F^\delta$ on the cohomology group in middle degree. If this assumption holds, then we proved in \cite{Du3} that the Brauer tree of the principal block has the expected shape. However, a stronger assumption is needed to obtain the planar embedding of the tree and Brou\'e's conjecture:

\centers{\begin{tabular}{cp{13cm}} \hskip-1mm $\mathbf{(S)}$ &  The $\Lambda$-modules $b\Hc^i(\Y(\dot c),\Lambda)$ are torsion-free. \end{tabular}}

The purpose of this paper is to discuss these assumptions for the Deligne-Lusztig variety $\Y(c)$ and some interesting quotients, and to prove that they are valid in the majority of cases. The main result in this direction is the following:

\begin{thm2} Let $b'$ be the idempotent associated with the principal $\ell$-block of $T_c$. If  the type of $G$ is not  E$_7$ or E$_8$,  then the $\Lambda$-modules $b\Hc^i(\Y(\dot c),\Lambda)b'$ are torsion-free.
\end{thm2}

\noindent Furthermore, we can also include the groups of type E$_7$ and E$_8$ if we assume that we already know the shape of the Brauer tree.  These have been now determined by the author and Raphaël Rouquier \cite{DuRou}. This proves a version of \cite[Theorem A]{Du3} where  $b\Hc^i(\Y(\dot c),\Lambda)$ is replaced by $b\Hc^i(\Y(\dot c),\Lambda)b'$. Note that it does not change the results in \cite{Du3} which are implied by this theorem. Note also that the assumption on $p$ in  \cite[Theorem A]{Du3} can be dropped, since we no longer use generalised Gelfand-Graev representations to study the torsion (unlike in \cite{Du2}). In particular, we obtain a significant number of new cases of the geometric version of Brou\'e's conjecture (see Theorem \ref{3thm2}). We also deduce new planar embeddings of Brauer trees for the groups of type ${}^2$G$_2$, F$_4$ and ${}^2 $F$_4$ (see Theorem \ref{3thm3}).

 \sk
  
Our proof relies on Lusztig's work on the geometry of Deligne-Lusztig varieties associated with Coxeter elements \cite{Lu}. Many constructions that are derived from $\X(c)$, such as remarkable quotients and smooth compactifications, can be expressed in terms of varieties associated with smaller Coxeter elements. This provides an inductive method for finding the torsion in the cohomology of $\X(c)$. A further refinement adapted from \cite{BR2} is then used to lift the method up to $\Y(\dot c)$ and to show that the torsion part of $b\Hc^i(\Y(\dot c),\Lambda)b'$ is necessarily a cuspidal module. This reduces the problem of finding the torsion to the problem of finding where cuspidal composition factors can occur in the cohomology. We prove that these cannot occur outside the middle degree.


 \sk

This paper is organized as follows: the first section presents some preliminaries. We have compiled the basic techniques that are used in the modular Deligne-Lusztig theory. In the following section, we use the geometric results of \cite{Lu} to rephrase the assumption $\mathrm{(S)}$ into a more representation-theoretical condition involving cuspidal modules. The last section is devoted to this problem.  We prove that  the cohomology of $\X(c)$ has cuspidal composition factors in the middle degree only whenever the shape of the Brauer tree is known.

\section{Preliminaries\label{1se}}

In this preliminary section we set up the notation and introduce the main techniques (homological and geometric) that we will use throughout this paper. 
\subsection{Homological methods\label{1se1}}

\noindent \thesubsection.1. \textbf{Module categories and usual functors.} If $\mathscr{A}$ is an abelian category, we will denote by $C(\mathscr{A})$ the category of cochain complexes, by $K(\mathscr{A})$ the corresponding homotopy category and by $D(\mathscr{A})$ the derived category. We shall use the superscript notation $-$, $+$ and $b$ to denote the full subcategories of bounded above, bounded below or bounded complexes. We will always consider the case where $\mathscr{A} = A$-$\mathrm{Mod}$ is the module category over some ring $A$ or the full subcategory $A$-$\mathrm{mod}$ of finitely generated modules. This is actually not a strong restriction, since any small abelian category can be embedded into some module category \cite{Mit}. Since the categories $A$-$\mathrm{Mod}$ and $A$-$\mathrm{mod}$ have enough projective objects, one can define the usual derived bifunctors $\mathrm{RHom}_{A}^\bullet$ and ${\ol}_A$. 

\sk

Let $H$ be a finite group and $\ell$ be a prime number.  We fix an $\ell$-modular system $(K, \Lambda, k)$ consisting of a finite extension $K$ of the field of $\ell$-adic numbers $\mathbb{Q}_\ell$, the integral closure $\Lambda$ of the ring of $\ell$-adic integers in $K$ and the residue field $k$ of the local ring $\Lambda$. We assume moreover that the field $K$ is big enough for $H$, so that it contains the $e$-th roots of unity,  where $e$ is the exponent of $H$. In that case, the algebra $KH$ is split semi-simple. 

\sk

From now on, we shall focus on the case where $A = \mathcal{O} H$, with $\mathcal{O}$ being any ring among $(K,\Lambda,k)$. By studying the modular representation theory of $H$ we mean studying the module categories $\mathcal{O} H$-$\mathrm{mod}$ for various $\mathcal{O}$, and also the different connections between them. In this paper, most of the representations will arise in the cohomology of some complexes and we need to know how to pass from one coefficient ring to another. The scalar extension and $\ell$-reduction have a derived counterpart: if $C$ is any bounded complex of $\Lambda H$-modules we can form $KC = C \otimes_\Lambda K$ and $\overline{C} = kC = C {\ol}_\Lambda k$. Since $K$ is a flat $\Lambda$-module, the cohomology of the complex $KC$ is exactly  the scalar extension of the cohomology of $C$. However this does not apply to $\ell$-reduction, but the obstruction can be related to the torsion.

\begin{thm}[Universal coefficient theorem]\label{1thm1}Let $C$ be a bounded complex of $\Lambda H$-modules. Assume that the terms of $C$ are free over $\Lambda$. Then for all $n \geq 1$ and $i \in \mathbb{Z}$, there exists a short exact sequence of $\Lambda H$-modules

\centers{$0 \longrightarrow \mathrm{H}^i(C)\otimes_\Lambda \Lambda / \ell^n \Lambda  \longrightarrow \mathrm{H}^i\big(C \, {\ol}_\Lambda\, \Lambda / \ell^n \Lambda \big) \longrightarrow \mathrm{Tor}_1^\Lambda(\mathrm{H}^{i+1}(C), \Lambda/\ell^n \Lambda) \longrightarrow 0.$}

\end{thm}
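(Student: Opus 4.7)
The plan is to resolve $\Lambda/\ell^n\Lambda$ over the discrete valuation ring $\Lambda$ in order to compute $C \ol_\Lambda \Lambda/\ell^n\Lambda$ explicitly, and then to read off the desired short exact sequence from the resulting long exact cohomology sequence.

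Since $\ell$ is a uniformizer of $\Lambda$, multiplication by $\ell^n$ yields the length-one free resolution
\[ 0 \longrightarrow \Lambda \stackrel{\ell^n}{\longrightarrow} \Lambda \longrightarrow \Lambda/\ell^n\Lambda \longrightarrow 0. \]
As the terms of $C$ are $\Lambda$-free, hence flat, the derived tensor product $C \ol_\Lambda \Lambda/\ell^n\Lambda$ is represented by the mapping cone of $\ell^n \colon C \to C$, producing a distinguished triangle
\[ C \stackrel{\ell^n}{\longrightarrow} C \longrightarrow C \ol_\Lambda \Lambda/\ell^n\Lambda \longrightarrow C[1] \]
in $D^b(\Lambda H\text{-}\mathrm{Mod})$. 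Because $\ell^n$ lies in the centre of $\Lambda H$, every arrow above is automatically $\Lambda H$-linear, so the whole construction takes place in the category of $\Lambda H$-modules rather than merely $\Lambda$-modules.

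Applying the cohomological functor $\mathrm{H}^i$ yields the long exact sequence
\[ \cdots \longrightarrow \mathrm{H}^i(C) \stackrel{\ell^n}{\longrightarrow} \mathrm{H}^i(C) \longrightarrow \mathrm{H}^i(C \ol_\Lambda \Lambda/\ell^n\Lambda) \longrightarrow \mathrm{H}^{i+1}(C) \stackrel{\ell^n}{\longrightarrow} \mathrm{H}^{i+1}(C) \longrightarrow \cdots \]
of $\Lambda H$-modules, which splits into short exact sequences
\[ 0 \longrightarrow \mathrm{H}^i(C)/\ell^n \mathrm{H}^i(C) \longrightarrow \mathrm{H}^i(C \ol_\Lambda \Lambda/\ell^n\Lambda) \longrightarrow \mathrm{H}^{i+1}(C)[\ell^n] \longrightarrow 0, \]
where $M[\ell^n]$ denotes the $\ell^n$-torsion submodule of $M$.

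It only remains to identify the outer terms. The quotient $\mathrm{H}^i(C)/\ell^n \mathrm{H}^i(C)$ is tautologically $\mathrm{H}^i(C) \otimes_\Lambda \Lambda/\ell^n\Lambda$, while tensoring the resolution above with $\mathrm{H}^{i+1}(C)$ over $\Lambda$ computes $\mathrm{Tor}_1^\Lambda(\mathrm{H}^{i+1}(C), \Lambda/\ell^n\Lambda)$ as the kernel of multiplication by $\ell^n$ on $\mathrm{H}^{i+1}(C)$, that is, precisely $\mathrm{H}^{i+1}(C)[\ell^n]$. There is no genuine obstacle in this argument: it is standard homological algebra over a DVR, and the only nontrivial point, compatibility with the $H$-action throughout, is handled uniformly by the centrality of $\ell^n$ in $\Lambda H$.
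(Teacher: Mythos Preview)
The paper states this theorem without proof, treating it as a standard result from homological algebra. Your argument is correct and is precisely the standard derivation of the universal coefficient theorem over a DVR via the distinguished triangle coming from the short free resolution of $\Lambda/\ell^n\Lambda$; there is nothing to compare against in the paper itself.
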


In particular, whenever there is no torsion in $\mathrm{H}^\bullet(C)$  (and the terms of $C$ are still assumed to be torsion-free) then the cohomology of $\overline{C}$ is exactly the $\ell$-reduction of the cohomology of $C$.

\bk

\noindent \thesubsection.2. \textbf{Composition factors in the cohomology.}  Let $C$ be a complex of $kH$-modules and $L$ be a simple $kG$-module. We denote by $P_L$ the projective cover of $L$ in $kG$-$\mathrm{mod}$. We can determine the cohomology groups of $C$ in which $L$ occurs as a composition factor using the following standard result.

\begin{lem}\label{1lem1}Given $i \in \mathbb{Z}$, the following assertions are equivalent:

\begin{enumerate}

\item[$\mathrm{(i)}$] The $i$-th cohomology group of the complex $\mathrm{RHom}_{kH}^\bullet (P_L,C)$ is non-zero;

\item[$\mathrm{(ii)}$] $\mathrm{Hom}_{K^b(kH)}(P_L,C[i])$ is non-zero;

\item[$\mathrm{(iii)}$] $L$ is a composition factor of $\mathrm{H}^i(C)$. 

\end{enumerate}

\end{lem}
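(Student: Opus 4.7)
The plan is to show $\mathrm{(i)} \Leftrightarrow \mathrm{(ii)}$ using the projectivity of $P_L$ to identify $\mathrm{RHom}$ with the ordinary Hom-complex, and $\mathrm{(ii)} \Leftrightarrow \mathrm{(iii)}$ by interpreting maps in the homotopy category as ordinary module maps and invoking the standard fact that $\mathrm{Hom}_{kH}(P_L, -)$ detects $L$ as a composition factor.

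For $\mathrm{(i)} \Leftrightarrow \mathrm{(ii)}$, I would first note that $P_L$, viewed as a complex concentrated in degree $0$, is a complex of projective $kH$-modules. Hence it is its own projective resolution and the derived functor $\mathrm{RHom}_{kH}^\bullet(P_L,C)$ is represented by the Hom-complex $\mathrm{Hom}_{kH}^\bullet(P_L,C)$. Then I would invoke the general formula $\mathrm{H}^i(\mathrm{Hom}^\bullet(X,C)) \simeq \mathrm{Hom}_{K(kH)}(X, C[i])$, valid for any complex $X$, which identifies the $i$-th cohomology of the Hom-complex with morphisms in the homotopy category. Since $P_L$ is bounded and $C$ is bounded, we are in $K^b(kH)$, so these two quantities are indeed simultaneously zero or non-zero.

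For $\mathrm{(ii)} \Leftrightarrow \mathrm{(iii)}$, my plan is to show that $\mathrm{Hom}_{K^b(kH)}(P_L, C[i])$ naturally identifies with $\mathrm{Hom}_{kH}(P_L, \mathrm{H}^i(C))$. A chain map $P_L \to C[i]$ (with $P_L$ concentrated in degree $0$) is nothing but a morphism $\varphi : P_L \to Z^i(C)$ into the cocycles. Two such maps $\varphi, \psi$ are homotopic precisely when $\varphi - \psi$ factors through the coboundary $d^{i-1} : C^{i-1} \to C^i$; since $P_L$ is projective, this happens if and only if $\varphi - \psi$ takes values in $B^i(C)$, i.e. if the induced maps to $\mathrm{H}^i(C) = Z^i(C)/B^i(C)$ coincide. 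Conversely every map $P_L \to \mathrm{H}^i(C)$ lifts to $Z^i(C)$ by projectivity, so the identification is bijective.

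It then remains to recall that, since $P_L$ is projective, the functor $\mathrm{Hom}_{kH}(P_L, -)$ is exact and $\dim_k \mathrm{Hom}_{kH}(P_L, M)$ is additive on short exact sequences; evaluating on a composition series and using that $\mathrm{Hom}_{kH}(P_L, L') = 0$ for simple $L' \not\simeq L$ (and is one-dimensional for $L' \simeq L$, as $K$ is a splitting field), one concludes that $\dim_k \mathrm{Hom}_{kH}(P_L, \mathrm{H}^i(C))$ equals the multiplicity of $L$ as a composition factor of $\mathrm{H}^i(C)$. This yields the equivalence with $\mathrm{(iii)}$. None of the three equivalences presents any real difficulty; the only mild subtlety is the homotopy argument in the second paragraph, which is where the projectivity of $P_L$ is used in an essential way.
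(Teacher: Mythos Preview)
Your argument is correct and complete. The paper itself does not give a proof but merely cites \cite[Section 1.1.2]{Du2}; your explicit argument via the identification $\mathrm{Hom}_{K^b(kH)}(P_L,C[i]) \simeq \mathrm{Hom}_{kH}(P_L,\mathrm{H}^i(C))$ is exactly the standard one and is what lies behind that reference. One tiny slip: you wrote ``$K$ is a splitting field'' where you meant $k$, but this is harmless since for the stated equivalence you only need $\mathrm{Hom}_{kH}(P_L,L')=0$ for simple $L'\not\simeq L$, which holds regardless.
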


\begin{proof} See for example \cite[Section 1.1.2]{Du2}. \end{proof}

The formulation $\mathrm{(i)}$ is particularly adapted to our framework and will be extensively used in Section \ref{3se}. 

\bk

\noindent \thesubsection.3. \textbf{Generalized eigenspaces over $\Lambda$.} Let $M$ be a finitely generated $\Lambda$-module and $f \in \mathrm{End}_\Lambda(M)$. Assume that the eigenvalues of $f$ are in the ring $\Lambda$. For $\lambda \in \Lambda$, we have defined in  \cite[Section 1.2.2]{Du3} the generalized $(\lambda)$-eigenspace $M_{(\lambda)}$ of $f$ on $M$. Here are the principal properties that we shall use:

\begin{itemize}

\item $M_{(\lambda)}$ is a direct summand of $M$ and $M$ is the direct sum of the generalized $(\lambda)$-eigenspaces for various $\lambda \in \Lambda$;

\item if $\lambda$ and $\mu$ are congruent modulo $\ell$ then $M_{(\lambda)} = M_{(\mu)}$;

\item $(kM)_{(\lambda)} := M_{(\lambda)} \otimes_\Lambda k$ is the usual generalized $\bar \lambda$-eigenspace of $\bar f$;

\item $(KM)_{(\lambda)} := M_{(\lambda)} \otimes_\Lambda K$ is the sum of the usual generalized $\mu$-eigenspaces of $f$ where $\mu$ runs over the elements of $\Lambda$ that are congruent to $\lambda$ modulo $\ell$.
\end{itemize}

\subsection{Geometric methods\label{1se2}}

To any quasi-projective variety $\X$ defined over $\overline{\mathbb{F}}_p$ and acted on by $H$, one can associate a classical object in the derived category $D^b(\mathcal{O}H$-$\mathrm{Mod})$, namely the cohomology with compact support of $\X$, denoted by $\Rgc(\X,\mathcal{O})$. It is quasi-isomorphic to a bounded complex of finitely generated $\mathcal{O}H$-modules that are free over $\mathcal{O}$. Moreover, the cohomology complex behaves well with respect to scalar extension and $\ell$-reduction. We have indeed in $D^b(\mathcal{O}H$-$\mathrm{Mod})$:

\centers{$  \Rgc(\X,\Lambda) \, {\ol}_{\Lambda } \, \mathcal{O} \, \simeq \, \Rgc(\X, \mathcal{O}).$}

\noindent In particular, the universal coefficient theorem (Theorem \ref{1thm1}) will hold for $\ell$-adic cohomology with compact support.

\sk 

We give here some quasi-isomorphisms we shall use in Sections \ref{2se} and \ref{3se}. The reader will find references or proofs of these properties in  \cite[Section 3]{BR1}.

\begin{prop}\label{1prop1}Let $\X$ and $\Y$ be two quasi-projective varieties acted on by $H$. Then we have the following isomorphisms in the derived category $D^b(\mathcal{O} H$-$\mathrm{Mod})$:

\begin{itemize} 

\item[$\mathrm{(i)}$] The K\"unneth formula: 

\centers[0]{$\Rgc(\X \times \Y, \mathcal{O}) \, \simeq \, \Rgc(\X , \mathcal{O}) \, \ol\, \Rgc(\Y,\mathcal{O}).$}

\item[$\mathrm{(ii)}$]  The quotient variety $H \backslash \X$ exists. Moreover, if the order of the stabilizer of any point of $\X$ is prime to $\ell$, then 

\centers[0]{$\Rgc(H \backslash \X, \mathcal{O}) \, \simeq \, \mathcal{O} \, {\ol}_{\mathcal{O} H} \, \Rgc(\X,\mathcal{O})$.}

\end{itemize}
\end{prop}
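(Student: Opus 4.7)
The plan is to prove the two parts separately, each by reducing to a standard property of $\ell$-adic cohomology with compact support while tracking the $H$-equivariance carefully. Throughout, I would represent $\Rgc(\X,\mathcal{O})$ by a bounded complex of finitely generated $\mathcal{O}$-free $\mathcal{O} H$-modules, which exists because the cohomology complex is perfect over $\mathcal{O}$ and carries a natural $H$-action.

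For the K\"unneth formula (i), the main tool is proper base change applied to the Cartesian square formed by the two projections $p_1\colon \X\times\Y\to\X$, $p_2\colon \X\times\Y\to\Y$ and the structural morphisms to $\mathrm{Spec}\,\overline{\mathbb{F}}_p$. This identifies $Rp_{1!}\,\mathcal{O}_{\X\times\Y}$ with the constant complex on $\X$ of value $\Rgc(\Y,\mathcal{O})$. Applying $R\Gamma_c(\X,-)$ and invoking the projection formula, legitimate since $\Rgc(\Y,\mathcal{O})$ is a perfect complex of $\mathcal{O}$-modules, yields the stated isomorphism. The diagonal $H$-equivariance is preserved by functoriality of the base-change identifications, so the isomorphism is valid in $D^b(\mathcal{O} H\text{-Mod})$.

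For the quotient formula (ii), the existence of the geometric quotient $q\colon \X \to H\backslash \X$ is classical for a finite group acting on a quasi-projective variety. My strategy is to produce a bounded complex $P^\bullet$ of finitely generated projective $\mathcal{O} H$-modules quasi-isomorphic to $\Rgc(\X,\mathcal{O})$, for then $\mathcal{O}\,\ol_{\mathcal{O} H}\,\Rgc(\X,\mathcal{O})$ is represented by the plain coinvariants $\mathcal{O}\otimes_{\mathcal{O} H} P^\bullet$. One identifies these coinvariants with $\Rgc(H\backslash \X,\mathcal{O})$ by working locally on $H\backslash\X$: since $q$ is finite, $Rq_!\,\mathcal{O}_{\X}=q_*\,\mathcal{O}_{\X}$, and the stalk of the latter at $\bar x$ is the permutation module $\mathcal{O}[H/\mathrm{Stab}(x)]$, whose $H$-coinvariants are canonically $\mathcal{O}$ when $|\mathrm{Stab}(x)|$ is prime to $\ell$, matching the stalk of $\mathcal{O}_{H\backslash\X}$.

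The main obstacle is the construction of the projective $\mathcal{O} H$-model $P^\bullet$. I would handle it by stratifying $\X$ into $H$-stable locally closed subvarieties on which every point has the same stabilizer, and combining the resulting excision triangles. On a stratum with stabilizer $H'\leq H$, the contribution to $\Rgc(\X,\mathcal{O})$ is built from induced modules $\mathrm{Ind}_{H'}^H\mathcal{O}$, which are projective over $\mathcal{O} H$ precisely because $|H'|$ is invertible in $\mathcal{O}$ under our assumption. Assembling the excision triangles and choosing $\mathcal{O} H$-projective resolutions at each step yields the required complex $P^\bullet$ and closes the argument.
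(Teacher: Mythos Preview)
The paper does not give its own proof of this proposition: immediately before the statement it says that the reader will find references or proofs of these properties in \cite[Section 3]{BR1}, and no argument is supplied. So there is no in-paper proof to compare against; your proposal is effectively filling in what the paper outsources to the literature.

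Your sketch for part~(i) is the standard route (proper base change plus projection formula) and is fine.

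For part~(ii) the overall strategy---produce a bounded complex of projective $\mathcal{O}H$-modules representing $\Rgc(\X,\mathcal{O})$, then compute coinvariants termwise and identify them with $\Rgc(H\backslash\X,\mathcal{O})$ via the stalk computation for $q_*\mathcal{O}_\X$---is correct and is essentially what one finds in Bonnaf\'e--Rouquier. One point to tighten: you speak of strata ``on which every point has the same stabilizer $H'$''. Unless $H'$ is normal in $H$, an $H$-stable stratum can only have stabilizers constant up to conjugacy, not literally equal; and even then the stratum need not split as $H/H'$ times the quotient. The cleaner formulation is that $\Rgc(\X,\mathcal{O})$ can be represented by a bounded complex whose terms are finite direct sums of permutation modules $\mathcal{O}[H/H_x]$ with $H_x$ a point stabilizer (this is what the excision/stratification argument actually buys you, or one can quote Rickard's construction directly). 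The hypothesis that every $|H_x|$ is prime to~$\ell$ then makes each such permutation module projective over $\mathcal{O}H$, which is exactly what you need. With that adjustment your argument goes through.
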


If $N$ is a finite group acting on $\X$ on the right and on $\Y$ on the left, we can form the amalgamated product $\X\times_N \Y$, as the quotient of $\X \times \Y$ by the diagonal action of $N$. Assume that the actions of $H$ and $N$ commute and that the order of the stabilizer of any point for the 
diagonal action of $N$ is prime to $\ell$. Then $\X\times_N \Y$ is an $H$-variety and we deduce from the above properties that

\centers{$ \Rgc(\X\times_N \Y, \mathcal{O} ) \, \simeq \, \Rgc(\X , \mathcal{O} ) \, {\ol}_{\mathcal{O} N}\, \Rgc(\Y,\mathcal{O} )$}

\noindent in the derived category $D^b(\mathcal{O} H$-$\mathrm{Mod})$.

\begin{prop}\label{1prop2}Assume that $\Y$ is an open subvariety of $\X$, stable under the action of $H$. Denote by $\mathrm{Z} = \X \smallsetminus \Y$ its complement. Then there exists a distinguished triangle in $D^b(\mathcal{O} H $-$\mathrm{Mod})$:

\centers{$ \Rgc(\Y,\mathcal{O})\ \longrightarrow \ \Rgc(\X,\mathcal{O}) \ \longrightarrow \ \Rgc(\mathrm{Z},\mathcal{O}) \ \rightsquigarrow$}

\noindent Moreover, if $\Y$ is both open and closed, then this triangle splits.
\end{prop}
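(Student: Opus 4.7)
The plan is to obtain the triangle from the standard excision sequence in \'etale cohomology, applied equivariantly. Let $j:\Y\hookrightarrow\X$ denote the open immersion and $i:\mathrm{Z}\hookrightarrow\X$ the complementary closed immersion; both are $H$-equivariant morphisms of varieties over $\overline{\mathbb{F}}_p$. For the constant $H$-equivariant \'etale sheaf $\mathcal{O}_\X$, the adjunction unit $\mathcal{O}_\X\to i_*i^*\mathcal{O}_\X=i_*\mathcal{O}_\mathrm{Z}$ is surjective, and its kernel is canonically identified with $j_!\mathcal{O}_\Y$. This produces the short exact sequence
\[
0\longrightarrow j_!\,\mathcal{O}_\Y\longrightarrow \mathcal{O}_\X\longrightarrow i_*\,\mathcal{O}_\mathrm{Z}\longrightarrow 0
\]
of $H$-equivariant sheaves on $\X$. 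Pushing forward along the structural morphism $f:\X\to\mathrm{pt}$ with proper support and using the identities $Rf_!\circ j_!\simeq\Rgc(\Y,-)$ and $Rf_!\circ i_*\simeq\Rgc(\mathrm{Z},-)$ (the latter because $i$ is a closed immersion, so $i_*=i_!$ is exact) yields the announced distinguished triangle.

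For the splitting statement, assume that $\Y$ is both open and closed in $\X$. Then $\mathrm{Z}$ is also open and closed, so $i$ is an open immersion and $i_!=i_*$; geometrically, $\X$ is the disjoint union of the two clopen $H$-stable pieces $\Y$ and $\mathrm{Z}$. In this situation the short exact sequence above reduces to a canonical direct sum decomposition $\mathcal{O}_\X\simeq j_!\mathcal{O}_\Y\oplus i_!\mathcal{O}_\mathrm{Z}$ of $H$-equivariant sheaves, obtained by evaluating sections on the two components. Applying $Rf_!$ then gives a direct sum decomposition $\Rgc(\X,\mathcal{O})\simeq\Rgc(\Y,\mathcal{O})\oplus\Rgc(\mathrm{Z},\mathcal{O})$ in $D^b(\mathcal{O}H\text{-}\mathrm{Mod})$, which is precisely the statement that the triangle is split.

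The main obstacle is not the sheaf-theoretic triangle itself, which is classical, but rather promoting it to a distinguished triangle of complexes of $\mathcal{O}H$-\emph{modules} in a functorial way, so that the $H$-action is recorded at the chain level. The standard device, used e.g.\ in \cite{BR1}, is to fix on $\X$ an $H$-equivariant Godement-type (or injective) resolution of the constant sheaf and restrict it to $\Y$ and $\mathrm{Z}$; this represents $\Rgc(\X,\mathcal{O})$, $\Rgc(\Y,\mathcal{O})$ and $\Rgc(\mathrm{Z},\mathcal{O})$ by honest bounded complexes of $\mathcal{O}H$-modules of finite $\mathcal{O}$-rank, linked by genuine morphisms of complexes whose mapping cone realises the triangle. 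Once such a compatible model has been chosen, both the existence of the triangle and, in the clopen case, the splitting are immediate from what was proved at the level of sheaves.
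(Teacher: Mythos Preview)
Your argument is correct and is the standard one. Note, however, that the paper does not give its own proof of this proposition: it is stated as a known fact with the reader referred to \cite[Section~3]{BR1} for proofs. Your sketch via the excision short exact sequence $0\to j_!\mathcal{O}_\Y\to\mathcal{O}_\X\to i_*\mathcal{O}_\mathrm{Z}\to 0$ and its push-forward by $Rf_!$ is exactly the argument one finds in that reference, so there is nothing to compare.
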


Finally, for a smooth quasi-projective variety, Poincar\'e-Verdier duality  \cite{SGA45} establishes a remarkable relation between the cohomology complexes $\Rgc(\X,\mathcal{O})$ and $\mathrm{R}\Gamma(\X,\mathcal{O})$. We shall only need the weaker version for cohomology groups:

\begin{thm}[Poincar\'e duality]\label{1thm2}Let $\X$ be a smooth quasi-projective variety of pure dimension $d$. Then if $\mathcal{O}$ is the field $K$ or $k$, there exists a non-canonical isomorphism of $\mathcal{O}H$-modules

\centers{$ \Hc^i(\X,\mathcal{O})^* \, = \, \mathrm{Hom}_{\mathcal{O}}\big(\Hc^i(\X,\mathcal{O}), \mathcal{O}\big) \, \simeq \, \mathrm{H}^{2d-i}(\X,\mathcal{O}).$}

\end{thm}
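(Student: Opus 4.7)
The plan is to derive the cohomology-level statement from the full Poincar\'e-Verdier duality quasi-isomorphism in the derived category, and then to collapse it using that $\mathcal{O}$ is a field. This is the standard way to pass from Verdier duality to Poincar\'e duality.

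First I would invoke the derived-level result from \cite{SGA45}: since $\X$ is smooth of pure dimension $d$, its dualizing complex is quasi-isomorphic to $\mathcal{O}_\X[2d](d)$, and Verdier duality yields an $H$-equivariant quasi-isomorphism
\[
\mathrm{R}\mathrm{Hom}_{\mathcal{O}}\bigl(\Rgc(\X,\mathcal{O}),\,\mathcal{O}\bigr)\ \simeq\ \mathrm{R}\Gamma(\X,\mathcal{O})[2d](d)
\]
in $D^b(\mathcal{O} H$-$\mathrm{Mod})$. The $H$-equivariance is built in, as all the functors involved (proper pushforward, dualizing complex, Verdier dual) are functorial with respect to the algebraic automorphisms of $\X$ induced by $H$.

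Next I would pass to cohomology. Because $\mathcal{O}$ is a field, the functor $\mathrm{Hom}_{\mathcal{O}}(-,\mathcal{O})$ is exact and therefore commutes with cohomology: taking $\mathrm{H}^{-i}$ of the left-hand side yields $\mathrm{Hom}_{\mathcal{O}}(\Hc^i(\X,\mathcal{O}),\mathcal{O}) = \Hc^i(\X,\mathcal{O})^*$, while $\mathrm{H}^{-i}$ of the right-hand side equals $\mathrm{H}^{2d-i}(\X,\mathcal{O})(d)$.

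The last step is to dispose of the Tate twist. Since $H$ acts on $\X$ through $\overline{\mathbb{F}}_p$-morphisms, it acts trivially on each sheaf $\mu_{\ell^n}$, and hence trivially on $\mathcal{O}(d)$. Any $\mathcal{O}$-linear trivialization $\mathcal{O}(d) \simeq \mathcal{O}$ is therefore automatically $\mathcal{O}H$-linear, at the cost of being non-canonical: it depends on the choice of a compatible system of $\ell^n$-th roots of unity in $\overline{\mathbb{F}}_p$. This accounts for the word ``non-canonical'' in the statement. The only genuine subtlety in the argument is the $H$-equivariance of the Verdier duality quasi-isomorphism, which is already ensured by the construction in \cite{SGA45}; once this is granted, the rest is pure linear algebra over a field.
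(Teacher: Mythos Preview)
Your derivation is correct and is exactly the standard route from Verdier duality to Poincar\'e duality. Note, however, that the paper does not actually give a proof of this theorem: it is stated as a classical result and simply referred to \cite{SGA45}, with the remark that Poincar\'e--Verdier duality relates $\Rgc(\X,\mathcal{O})$ and $\mathrm{R}\Gamma(\X,\mathcal{O})$ and that only the weaker cohomology-level version is needed. In effect, you have spelled out the passage from the derived statement to the cohomology statement that the paper leaves implicit; there is nothing to compare beyond that.
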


\section{General results on the torsion\label{2se}}

We present in this section some general results on the torsion in the cohomology of Deligne-Lusztig varieties associated with Coxeter elements. We are motivated by the study of the principal $\ell$-block of $G$ in the \emph{Coxeter case}, involving only a specific class of prime numbers $\ell$. We will only briefly review the results that we will need about the principal $\ell$-block but a fully detailed treatment of the Coxeter case can be found in \cite{Du3}.

\sk

The problem of finding the torsion in the cohomology of a given variety is a difficult problem. We shall use here all the features of the Deligne-Lusztig varieties associated with Coxeter elements: smooth compactifications, filtrations and remarkable quotients by some finite groups. These will be the principal ingredients to prove that the contribution of the principal $\ell$-block to the torsion in the cohomology is necessarily a cuspidal module (see Corollary \ref{2cor2}). This addresses the problem of finding where cuspidal composition factors can occur in the cohomology. We will discuss this general problem in the next section.

\subsection{Finite reductive groups}

We keep the basic assumptions of the introduction, with some slight modification: $\G$ is a quasi-simple algebraic group, together with an isogeny $F$, some power of which is a Frobenius endomorphism. In other words, there exists a positive integer $\delta$ such that $F^\delta$ defines an $\mathbb{F}_{q^\delta}$-structure on $\G$ for a certain power $q^\delta$ of the characteristic $p$ (note that $q$ might not be an integer). Given an  $F$-stable algebraic subgroup $\mathbf{H}$ of $\G$, we will denote by $H$ the finite group of fixed points $\mathbf{H}^F$.  

\sk

We fix a Borel subgroup $\B$ containing a maximal torus $\T$ of $\G$ such that both $\B$ and $\T$ are $F$-stable.  We denote by  $\U$ (resp. $\U^-$)  the unipotent radical of $\B$ (resp. the opposite Borel subgroup $\B^-$). These groups define a root sytem $\Phi$ with basis $\Delta$, and a set of positive (resp. negative) roots $\Phi^+$ (resp. $\Phi^-$). Note that the corresponding Weyl group $W$ is endowed with an action of $F$, compatible with the isomorphism $W \simeq N_\G(\T)/\T$. Therefore, the image under $F$ of a root is a positive multiple of some other root, which will be denoted by $\phi^{-1}(\alpha)$, defining thus a bijection $\phi : \Phi \longrightarrow \Phi$. Since $\B$ is also $F$-stable, this map preserves $\Delta$ and $\Phi^+$. We will also use the notation $[\Delta/\phi]$ for a set of representatives of the orbits of $\phi$ on $\Delta$.  

\subsection{Review of the Coxeter case\label{2se1}}

Recall that the $p'$-part of the order of $G$ is a product of cyclotomic polynomials $\Phi_d(q)$ for various divisors $d$ of the degrees of $W$ (some precautions must be taken for Ree and Suzuki groups \cite{BMa1}). Therefore, if $\ell$ is any prime number different from $p$, it should divide at least one of these polynomials. Moreover, if we assume that $\ell$ is prime to $W^F$, then there is a unique $d$ such that $\ell \, | \, \Phi_d(q)$. 

\sk

In this paper we will be interested in the case where $d$ is maximal. Since $W$ is irreducible, it corresponds to the case where $d=h$ is the Coxeter number of the pair $(W,F)$. Explicit values of $h$ can be found in \cite{Du3}. For a more precise statement $-$ including the Ree and Suzuki groups $-$ recall that a Coxeter element of the pair $(W,F)$ is a product $c= s_{\beta_1} \cdots s_{\beta_r}$ where $\{\beta_1,\ldots,\beta_r\} = [\Delta/\phi]$ is any set of representatives of the orbits of the simple roots under the action of $\phi$. Then the \emph{Coxeter case} corresponds to the situation where $\ell$ is prime to $|W^F|$ and satisfies:

\begin{itemize}

 \item[$\bullet$] "non-twisted" cases: $\ell$ divides $\Phi_h(q)$; 

 \item[$\bullet$] "twisted" cases (Ree and Suzuki groups): $\ell$ divides the order of $T_c$ for some Coxeter element $c$.

\end{itemize}

\noindent Note that these conditions ensure that the class of $q$ in $k^\times$ is a primitive $h$-th root of unity.

\sk

As in Section \ref{1se1}, the modular framework will be given by  an $\ell$-modular system $(K,\Lambda,k)$, which we require to be big enough for $G$. We denote by $b$ an idempotent associated with be the principal block of $\Lambda G$. With the assumptions made on $\ell$, the $\ell$-component of $\T^{cF}$ is a Sylow $\ell$-subgroup of $G$ and as such is a defect group of $b$.  It will be denoted by $T_\ell$.

\sk

The structure of the block is closely related to the cohomology of the Deligne-Lusztig varieties associated with $c$. Fix an $F^\delta$-stable representative $\dot c$ of $c$ in $N_\G(\T)$ and define the varieties $\Y$ and $\X$ by

\sk

\centers{$ \begin{psmatrix}[colsep=2mm,rowsep=10mm] \Y & = \, \big\{ g\U \in \G / \U \ \big| \ g^{-1}F(g) \in \U \dot c \U \big\} \\
					\X & = \, \big\{ g\B \in \G / \B \ \big| \ g^{-1}F(g) \in \B c \B \big\} 
\psset{arrows=->>,nodesep=3pt} 
\everypsbox{\scriptstyle} 
\ncline{1,1}{2,1}<{\pi_c}>{/ \, \T^{cF}}		
\end{psmatrix}$}

\sk

\noindent where $\pi_c$ denotes the restriction to $\Y$ of the canonical projection $\G/\U \longrightarrow \G/\B$. They are both quasi-projective varieties endowed with a left action of $G$ by left multiplication. Furthermore, $\T^{cF}$ acts on the right of $\Y$ and $\pi_c$ is isomorphic to the corresponding quotient map, so that it induces a $G$-equivariant isomorphism of varieties $\Y/ \T^{cF} \iso \X$. Combining the results of \cite{DeLu}, \cite{Lu} and \cite{BMM} we can parametrize the irreducible characters of the principal $\ell$-block:

\begin{itemize}

\item the non-unipotent characters of $bKG$ are exactly the $\theta$-isotypic components $\Hc^r(\Y,K)_\theta$ where $\theta$ runs over the $F^\delta$-orbits in $\mathrm{Irr} \, T_\ell \smallsetminus \{1_{T_\ell}\}$;

\item the unipotent characters of $bKG$ are the eigenspaces of $F^\delta$ on $\Hc^\bullet(\X,K)$. Each eigenvalue is congruent modulo $\ell$ to $q^{j\delta}$ for a unique integer $j \in \{0,\ldots, h/\delta -1\}$. We denote by $\chi_j$ the corresponding irreducible character. Moreover, each Harish-Chandra series that intersects the block corresponds to a root of unity $\zeta \in K$ and the characters in this series are arranged as follows:

\centers{$ \begin{array}{c|c|c|@{\qquad \cdots \qquad}|c} 
\Hc^r(\X ,K) & \Hc^{r+1}(\X ,K)  & \Hc^{r+2}(\X ,K)  & \Hc^{r+M_\zeta-m_\zeta}(\X ,K) 
\\[4pt] \hline \chi_{m_\zeta} \vphantom{\mathop{A}\limits^n} & \chi_{m_\zeta+1} & \chi_{m_\zeta + 2}&  \chi_{M_\zeta} \\[4pt] \end{array} $}

\end{itemize}

\noindent Furthermore, the distinction "non-unipotent/unipotent" corresponds to the distinction  "exceptional/non-exceptional" in the theory of blocks with cyclic defect groups. The connection is actually much deeper: Hi\ss, L\"ubeck and Malle have observed in \cite{HLM} that the cohomology of the Deligne-Lusztig variety $\X$ should not only give the characters of the principal $\ell$-block, but  also its Brauer tree $\Gamma$:

\begin{conjHLM}[Hi\ss-L\"ubeck-Malle]\label{2conj1}Let $\Gamma^\bullet$  denote the graph obtained from the Brauer tree of the principal $\ell$-block by removing the exceptional node and all edges incident to it. Then the following holds:

\begin{enumerate} 

\item[$\mathrm{(i)}$]  The connected components of $\Gamma^\bullet$ are labeled by the Harish-Chandra series, hence by some roots of unity $\zeta \in K$.

\item[$\mathrm{(ii)}$]  The connected component corresponding to a root $\zeta$ is

\centers[0]{ \begin{pspicture}(10,1)
  \psset{linewidth=1pt}

  \cnode(0,0.2){5pt}{A}
  \cnode(2,0.2){5pt}{B}
  \cnode(4,0.2){5pt}{C}
  \cnode(8,0.2){5pt}{D}
  \cnode(10,0.2){5pt}{E}
  \ncline{A}{B}  \naput[npos=-0.1]{$\vphantom{\Big(}\chi_{m_\zeta}$} \naput[npos=1.1]{$\vphantom{\Big(}\chi_{m_\zeta+1}$}
  \ncline{B}{C}  \naput[npos=1.1]{$\vphantom{\Big(}\chi_{m_\zeta+2}$}
  \ncline[linestyle=dashed]{C}{D}
  \ncline{D}{E}  \naput[npos=-0.1]{$\vphantom{\Big(}\chi_{M_\zeta-1}$} \naput[npos=1.1]{$\vphantom{\Big(}\chi_{M_\zeta}$}
\end{pspicture}}

\item[$\mathrm{(iii)}$] The vertices labeled by $\chi_{m_{\zeta}}$ are the only nodes connected to the exceptional node.
\end{enumerate}
\end{conjHLM}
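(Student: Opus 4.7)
The plan is to deduce the Brauer tree from the cohomology complex $b\Rgc(\Y,\Lambda)$ under assumption $\mathrm{(W)}$. Since the block has cyclic defect $T_\ell$, its tree is determined by its edge set, and two ordinary characters are adjacent if and only if their $\ell$-reductions share a modular composition factor. The strategy is therefore to identify such overlaps pair by pair, exploiting the cohomological realization of the characters recalled in Section \ref{2se1} together with Theorem \ref{1thm1} and Lemma \ref{1lem1}.

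First I would split $b\Rgc(\X,\Lambda)$ according to the Hecke algebra action studied in \cite{Lu}: over $K$ it decomposes into summands indexed by the roots of unity $\zeta\in K$, each summand carrying the unipotent characters $\chi_{m_\zeta},\ldots,\chi_{M_\zeta}$ of a single Harish-Chandra series as the $F^\delta$-eigenspaces in consecutive degrees $r,r+1,\ldots,r+M_\zeta-m_\zeta$. By $\mathrm{(W)}$, the minimal generalized eigenspaces (those in middle degree) are $\Lambda$-free. The universal coefficient theorem then relates the $\ell$-reduction of each eigenspace to that of the next degree through a $\mathrm{Tor}$ term, producing a common modular constituent between $\overline{\chi}_j$ and $\overline{\chi}_{j+1}$ whenever a non-trivial extension persists; a rigidity argument comparing ranks with the $K$-cohomology dimensions then shows that these are the only overlaps within each summand, yielding the line structure of $\mathrm{(ii)}$.

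To rule out edges across different Harish-Chandra series, item $\mathrm{(i)}$, I would use that the Hecke algebra central character attached to $\zeta$ survives $\ell$-reduction, since the class of $q$ is a primitive $h$-th root of unity; consequently modular composition factors of characters in distinct series cannot coincide. For $\mathrm{(iii)}$, the quotient map $\pi_c\colon\Y\twoheadrightarrow\X$ by $T^{cF}$ identifies $\Rgc(\X,\Lambda)$ with $\Lambda\,{\ol}_{\Lambda T^{cF}}\,\Rgc(\Y,\Lambda)$ via Proposition \ref{1prop1}; the non-unipotent characters $\Hc^r(\Y,K)_\theta$ are concentrated in the single degree $r$ and can only overlap, after reduction, with unipotent characters also appearing in $\Hc^r(\X,K)$, namely $\chi_{m_\zeta}$ for the various $\zeta$.

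The main obstacle is $\mathrm{(W)}$ itself: everything above is conditional on it. Establishing $\mathrm{(W)}$ is precisely the goal of the remainder of the paper and is highly non-trivial; it requires a delicate reduction, using Lusztig's filtrations and quotient descriptions of $\X(c)$ together with the generalized Gelfand-Graev representations recalled in Section \ref{1se3}, to show that cuspidal composition factors cannot appear outside the middle-degree cohomology. Once this torsion-control step is in place, the Brauer tree shape follows from the modest homological machinery sketched above.
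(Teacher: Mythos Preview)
The statement you are addressing is not a theorem in this paper: it is \emph{Conjecture HLM}, recorded here without proof. The paper explicitly notes that its validity has been checked case-by-case wherever the Brauer tree is known, and that a general conditional proof---under assumption $\mathrm{(W)}$---is given in the author's previous paper \cite{Du3}. The present paper's contribution is not to reprove the conjecture but to establish the torsion hypotheses $\mathrm{(W)}$ and $\mathrm{(S)}$ (Theorem~\ref{3thm1}), which then feed back into \cite{Du3}.

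Consequently there is no ``paper's own proof'' of this statement to compare against. Your proposal is really a sketch of the argument from \cite{Du3}, and in that sense it is in the right spirit: the conditional deduction of the tree shape from the cohomology, together with the acknowledgment that the entire edifice rests on $\mathrm{(W)}$, matches how the two papers divide the labour. Your final paragraph even correctly summarises what the present paper does to discharge $\mathrm{(W)}$.

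One caution on the sketch itself: the step ``a rigidity argument comparing ranks with the $K$-cohomology dimensions then shows that these are the only overlaps within each summand'' is doing a lot of work and is where the actual difficulty in \cite{Du3} lies. Knowing that consecutive $\overline{\chi}_j$ share a constituent is far from knowing that \emph{only} consecutive ones do, and that each overlap is a single simple module; this requires the full machinery of perverse equivalences and the explicit control of the complex $b\Rgc(\Y,\Lambda)$ developed there, not merely a rank count. Likewise, your argument for $\mathrm{(i)}$ via surviving Hecke central characters is plausible but would need the compatibility of the Harish-Chandra decomposition with $\ell$-reduction, which is again part of the input from \cite{Du3} rather than something immediate.
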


The validity of this conjecture has been checked in all cases where the Brauer tree was known, that is for all quasi-simple groups except the groups of type E$_7$ and E$_8$. A  general proof has been exposed  in \cite{Du3}, but under a precise assumption on the torsion in the cohomology of $\Y$: 

\centers{\begin{tabular}{cp{13cm}} \hskip-1mm $\mathbf{(W)}$ &  For all $\zeta \in K$ corresponding to an Harish-Chandra series in the block,  the generalized $(q^{m_\zeta \delta})$-eigenspace of $F^\delta$ on $b\Hc^\bullet(\Y,\Lambda)$ is torsion-free. \end{tabular}}

\noindent This assumption concerns only the "minimal" eigenvalues, that is the eigenvalues of $F^\delta$ on the cohomology of $\X$ in middle degree. Unfortunately, we will not be able to prove it in its full generality for the variety $\Y$, but only for the variety $\X$ (see Proposition \ref{2prop3}). 

\sk

We have investigated in \cite{Du3} the consequences of a stronger assumption, concerning all the eigenvalues of $F^\delta$:

\centers{\begin{tabular}{cp{13cm}} \hskip-1mm $\mathbf{(S)}$ &  The $\Lambda$-modules $b\Hc^i(\Y,\Lambda)$ are torsion-free. \end{tabular}}

\noindent Such an assumption is known to be valid for groups with $\mathbb{F}_q$-rank $1$ (since the corresponding Deligne-Lusztig variety is an irreducible affine curve) and for groups of type A$_n$  \cite{BR2}. The purpose of this paper is to prove that it is actually valid for any quasi-simple group, including E$_7$ and E$_8$ if Conjecture  \hyperref[2conj1]{(HLM)} holds (see Theorem \ref{3thm1}). As a byproduct, we obtain a proof of the geometric version of Brou\'e's conjecture in the Coxeter case. We also deduce the planar embedding of Brauer trees for groups of type ${}^2$G$_2$, F$_4$ and ${}^2 $F$_4$.

\subsection{Torsion and cuspidality\label{2se2}}

Let $I \subset \Delta$ be a $\phi$-stable subset of simple roots. We denote by $\P_I$ the standard parabolic subgroup, by $\U_I$ its unipotent radical and by $\L_I$ its standard Levi complement. The corresponding Weyl group is the subgroup $W_I$ of $W$ generated by the simple reflexions in $I$. All these groups are $F$-stable. One obtains a Coxeter element $c_I$ of $(W_I,F)$ by removing in $c$ the reflexions associated with the simple roots which are not in $I$. Written with the Borel subgroup  $\B_I = \B \cap \L_I$ of $\L_I$, the Deligne-Lusztig variety $\X_I$ associated with $c_I$ is by definition

 \centers{$ \X_I \, = \, \X_{\L_I}(c_I) \, = \, \big\{g\B_I \in \L_I/ \B_I\, \big| \, g^{-1}F(g) \in \B_I c_I \B_I \big\}.$}

Recall  that the Harish-Chandra induction and restriction functors are defined over any coefficient ring $\mathcal{O}$ among $(K,\Lambda,k)$ by

\leftcenters{and}{$ \begin{array}[b]{l} \hphantom{{}^*}\mathrm{R}_{L_I}^G \, : \begin{array}[t]{rcl} \mathcal{O} L_I \text{-}\mathrm{mod} & \longrightarrow & \mathcal{O} G \text{-}\mathrm{mod}  \\ 
N & \longmapsto & \mathcal{O}[G/U_I] \otimes_{\mathcal{O} L_I} N \end{array} \\[20pt]
 {}^*\mathrm{R}_{L_I}^G \, : \begin{array}[t]{rcl} \mathcal{O} G \text{-}\mathrm{mod} & \longrightarrow & \mathcal{O} L_I \text{-}\mathrm{mod}  \\ 
M & \longmapsto & M^{U_I}. \end{array} \end{array}$}

\noindent By the results in \cite{Lu}, the restriction of the cohomology of $\X$ can be expressed in terms of the cohomology of smaller Coxeter varieties, leading to  an inductive approach for studying the torsion. 
 Given the assumptions made on $\ell$ in Section \ref{2se1}, we can prove the following:

\begin{prop}\label{2prop1}If $I$ is a proper $\phi$-stable subset of $\Delta$, then the groups $\Hc^i(\X_I,\Lambda)$ are torsion-free.
\end{prop}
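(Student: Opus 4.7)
The plan is to show that $\ell$ is a banal prime for the Levi subgroup $L_I$ --- meaning $\ell \nmid |L_I^F|$ --- and then combine this with Lusztig's detailed description of the cohomology of Coxeter varieties.

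Since $I$ is a proper $\phi$-stable subset of $\Delta$, the Coxeter number $h_I$ of $(W_I,F)$ is strictly less than $h$: the Coxeter number equals the largest fundamental degree of $(W,F)$, and this maximum strictly decreases when we pass to a proper standard parabolic. The $p'$-part of $|L_I^F|$ factors as a product of cyclotomic values $\Phi_d(q)$ where $d$ runs over divisors of some degree of $W_I$, so $d \leq h_I < h$. Combined with the assumption $\ell \nmid |W^F|$ from Section~\ref{2se1}, which forces the multiplicative order of $q$ modulo $\ell$ to be exactly $h$, this yields $\ell \nmid \Phi_d(q)$ for every $d < h$, hence $\ell \nmid |L_I^F|$. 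In particular every block of $\Lambda L_I$ has defect zero, $kL_I$ is semisimple, and the decomposition map $\Irr(KL_I) \to \Irr(kL_I)$ is a dimension-preserving bijection.

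Next, I would invoke Lusztig \cite{Lu} applied to $L_I$ and $c_I$: the $KL_I$-modules $\Hc^{r_I+j}(\X_I,K)$ (for $0 \leq j \leq r_I := l(c_I)$) are multiplicity-free with pairwise disjoint irreducible constituents, and the eigenvalues of $F^\delta$ on them are pairwise distinct powers of $q^\delta$. Since $r_I < h/\delta$ (a consequence of $I \subsetneq \Delta$) and $q^\delta$ has multiplicative order $h/\delta$ modulo $\ell$, these eigenvalues remain pairwise non-congruent modulo $\ell$. Applying the generalized $F^\delta$-eigenspace decomposition of Section~\ref{1se1}.3 then splits
\[
\Rgc(\X_I,\Lambda) \;\simeq\; \bigoplus_{\bar\lambda \in k} \Rgc(\X_I,\Lambda)_{(\bar\lambda)}
\]
in $D^b(\Lambda L_I\text{-}\mathrm{Mod})$, each summand having $K$-cohomology concentrated in a single degree $i_{\bar\lambda}$.

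The main obstacle is upgrading this concentration to $\Lambda$-torsion-freeness. For each simple $kL_I$-module $S$ attached to $\bar\lambda$, let $\tilde S := V_{B_S}^\Lambda$ be the projective $\Lambda L_I$-lattice lifting it in its defect-zero block $B_S$. Since $\tilde S$ is projective, $\mathrm{Hom}_{\Lambda L_I}(\tilde S,-)$ is exact, so
\[
\mathrm{H}^i\bigl(\mathrm{Hom}_{\Lambda L_I}(\tilde S,\Rgc(\X_I,\Lambda))\bigr) \;=\; \mathrm{Hom}_{\Lambda L_I}(\tilde S,\Hc^i(\X_I,\Lambda)).
\]
The $K$-rank of this $\Lambda$-module equals $\delta_{i,i_S}$ by Lusztig's multiplicity-one result, while its $k$-reduction computes the multiplicity of $S$ in $\Hc^i(\X_I,k)$ via Lemma~\ref{1lem1} and the semisimplicity of $kL_I$. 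The universal coefficient theorem (Theorem~\ref{1thm1}) translates torsion-freeness of each $\Hc^i(\X_I,\Lambda)$ into the matching of $K$- and $k$-multiplicities degree by degree. To close the argument one combines Poincar\'e duality (Theorem~\ref{1thm2}) on the smooth affine variety $\X_I$ with an induction on $|I|$ using the smooth compactification of $\X_I$ whose boundary is a union of Coxeter varieties $\X_J$ for proper $J \subsetneq I$, so that torsion in some $\Hc^i(\X_I,\Lambda)_{(\bar\lambda)}$ would be inherited from (and hence contradicted by) torsion in the inductively known smaller pieces.
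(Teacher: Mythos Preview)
Your outline has the right endgame---induction on $|I|$, the smooth compactification $\overline{\X}_I$, Poincar\'e duality, semisimplicity of $kL_I$---and that is precisely what the paper does. But two things need fixing. First, the eigenvalue-separation step is both inaccurate and unnecessary. The eigenvalues of $F^\delta$ on $\Hc^{r_I+j}(\X_I,K)$ are not just powers of $q^\delta$: by \cite{Lu} they have the form $\zeta\, q^{j'\delta}$ with $\zeta$ a root of unity attached to the cuspidal pair defining the Harish-Chandra series. Your bound $r_I < h/\delta$ only separates eigenvalues within a single series; it says nothing about collisions modulo $\ell$ between different $\zeta$'s. And even if the $(\bar\lambda)$-eigenspace of $\Rgc(\X_I,\Lambda)$ had $K$-cohomology concentrated in one degree, that would not by itself force torsion-freeness over $\Lambda$ (a perfect complex over $\Lambda$ can be $K$-acyclic outside one degree and still have torsion cohomology). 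So this step does not reduce the difficulty.

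Second, your final paragraph is where the actual proof lives, and the mechanism you sketch (``torsion would be inherited from smaller pieces'') is not quite the right one. The paper instead uses Lusztig's isomorphism $(U_J\cap L_I)\backslash \X_I \simeq \X_J \times \mathbb{G}_m$ for each maximal $\phi$-stable $J\subsetneq I$ to compute ${}^*\mathrm{R}_{L_J}^{L_I}\Hc^i(\X_I,\Lambda) \simeq \Hc^{i-1}(\X_J,\Lambda)\oplus\Hc^{i-2}(\X_J,\Lambda)$, which is torsion-free by induction; hence the torsion part of $\Hc^\bullet(\X_I,\Lambda)$ is a \emph{cuspidal} $\Lambda L_I$-module. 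Because $kL_I$ is semisimple, cuspidal constituents split off as summands, and the boundary strata of $\overline{\X}_I$ (all induced from proper Levis) have zero cuspidal part. This gives $\Hc^i(\X_I,k)_{\mathrm{cusp}}\simeq \mathrm{H}^i(\overline{\X}_I,k)_{\mathrm{cusp}}$, and Poincar\'e duality on the smooth projective $\overline{\X}_I$ then identifies it with $\Hc^{2r_I-i}(\X_I,k)_{\mathrm{cusp}}^*$; affineness of $\X_I$ forces vanishing for $i\neq r_I$, and the middle degree is handled by $\Hc^{r_I-1}(\X_I,k)=0$ together with the universal coefficient theorem. The missing idea in your sketch is this reduction to cuspidal modules via Harish-Chandra restriction; the eigenvalue splitting plays no role.
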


\begin{proof} By induction on the cardinality of $I$, the case $I = \emptyset$ being trivial. Assume that $I$ is non-empty, and let $J$ be any maximal proper $\phi$-stable subset of $I$. By \cite[Corollary 2.10]{Lu} there exists an isomorphism of varieties $(U_J \cap \L_I) \backslash \X_I \simeq \X_J \times \G_m$ which yields the following  isomorphism of $\Lambda$-modules 

\centers{$ {}^* \mathrm{R}_{L_J}^{L_I} \big(\Hc^i(\X_I,\Lambda)\big) \, \simeq \, \Hc^{i-1}(\X_J,\Lambda) \oplus  \Hc^{i-2}(\X_J,\Lambda). $}

\noindent Consequently, the Harish-Chandra restriction to $L_J$ of $\Hc^\bullet(\X_I,\Lambda)_{\mathrm{tor}}$ is a torsion submodule of $\Hc^\bullet(\X_J,\Lambda)$ and hence it is zero by assumption. In other words, any torsion submodule of $\Hc^\bullet(\X_I,\Lambda)_{\mathrm{tor}}$ is a cuspidal $\Lambda L_I$-module. 

\sk

Let $r_I= | I/\phi|$ be the number of $\phi$-orbits in $I$. Following \cite{Lu}, we can define a smooth compactification $\overline\X_I$ of $\X_I$. It has a filtration by closed $L_I$-subvarieties 
 $\overline{\X}_I = D_{r_I}(I) \supset D_{r_I-1}(I) \supset \cdots \supset D_{0}(I) = L_I/B_I$  such that each pair $(D_a(I),D_{a-1}(I))$ leads to the following long exact sequences of  $kL_I$-modules:

\centers{$\cdots \longrightarrow \displaystyle \hskip -5mm \bigoplus_{\begin{subarray}{c}  J \subset I \ \phi\text{-stable}\\[2pt] |J/\phi| = a \end{subarray}} \hskip -2mm\mathrm{R}_{L_J}^{L_I} \big( \Hc^i(\X_J,k) \big) \ \longrightarrow \ \mathrm{H}^i(D_a(I),k) \, \longrightarrow \ \mathrm{H}^i(D_{a-1}(I), k) \ \longrightarrow \cdots$}

\noindent Since $kL_I$ is a semi-simple algebra, we are in the following situation:

\begin{itemize}

\item any cuspidal composition factor of a $kL_I$-module $M$ is actually a direct summand of $M$. Therefore we can naturally define the cuspidal part $M_\mathrm{cusp}$ of $M$ as the sum of all cuspidal submodules;

\item if $J \neq I$ then an induced module $\mathrm{R}_{L_J}^{L_I}(M)$ has a zero cuspidal part;

\item by Poincar\'e duality (see Theorem \ref{1thm2}) $\mathrm{H}^i(\overline\X_I,k)$ is isomorphic to the $k$-dual of the $kL_I$-module $\mathrm{H}^{2r_I-i}(\overline\X_I,k)$.

\end{itemize}

\noindent Consequently, we can argue as in \cite{Lu} and use the following isomorphisms :

\centers{$ \Hc^i(\X_I,k)_\mathrm{cusp} \, \simeq \, \mathrm{H}^i(\overline{\X}_I,k)_\mathrm{cusp} \, \simeq \, \mathrm{H}^{2r_I-i}(\overline{\X}_I,k)_\mathrm{cusp}^*  \, \simeq \, \Hc^{2r_I - i}(\X_I,k)_\mathrm{cusp}^*$}

\noindent to deduce that $\Hc^i(\X_I,k)$ has a zero cuspidal part whenever $i > r_I$. 

\sk

Finally, $\Hc^i(\X_I,\Lambda)_{\mathrm{tor}} \otimes_\Lambda k$ is a cuspidal submodule of $\Hc^i(\X_I,\Lambda)\otimes_\Lambda k$ which is also a submodule of $\Hc^i(\X_I,k)$ by the universal coefficient theorem (see Theorem \ref{1thm1}). Therefore it must be zero if $i \neq r_I$ and $\Hc^i(\X_I,\Lambda)$ is torsion-free. Note that the cohomology group in middle degree is also torsion-free since $\X_I$ is an irreducible affine variety: we have indeed $\Hc^{r_I-1}(\X_I,k) = 0$ so that  $\Hc^{r_I}(\X_I,\Lambda)_{\mathrm{tor}} = 0$ by the universal coefficient theorem.
\end{proof}

\begin{rmk}\label{2rmk1}We have actually shown that the cohomology of a Deligne-Lusztig variety associated with a Coxeter element is torsion-free whenever $\ell$ does not divide the order of the corresponding finite reductive group. One can conjecture that this should hold for any Deligne-Lusztig variety. Note that the assumption on $\ell$ is crucial, otherwise an induced module can have cuspidal composition factors. For example if $\ell$ divides $\Phi_h(q)$, then $\mathrm{R}_T^G(k) = k[G/B]$ has at least one cuspidal composition factor $-$ denoted by $S_0$ in \cite{Du3}. \end{rmk}

As an immediate consequence of Proposition \ref{2prop1} and the isomorphism in \cite[Corollary 2.10]{Lu}, we obtain: 

\begin{cor}\label{2cor1}The torsion part of any group $\Hc^i(\X,\Lambda)$ is a cuspidal $\Lambda G$-module.
\end{cor}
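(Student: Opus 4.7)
The plan is to obtain the cuspidality of the torsion submodule by showing that every Harish-Chandra restriction of $\Hc^i(\X,\Lambda)$ to a proper $\phi$-stable Levi subgroup is already torsion-free. Granted this, any torsion submodule of $\Hc^i(\X,\Lambda)$ is killed by $^*\mathrm{R}_{L_J}^G$ for every proper $\phi$-stable $J$, which by definition means it is cuspidal.

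First I would specialize the geometric input of Proposition \ref{2prop1} to the case $I=\Delta$. That is, for every proper $\phi$-stable $J\subsetneq \Delta$, invoke \cite[Corollary 2.10]{Lu} to get the isomorphism of varieties $U_J\backslash \X \simeq \X_J \times \mathbb{G}_m$. Since $U_J$ is a $p$-group and $\ell\neq p$, Proposition \ref{1prop1}(ii) applies and yields, together with the K\"unneth formula,
\[
\Rgc(\X,\Lambda)^{U_J} \,\simeq\, \Rgc(U_J\backslash \X,\Lambda) \,\simeq\, \Rgc(\X_J,\Lambda)\,\ol\,\Rgc(\mathbb{G}_m,\Lambda).
\]
Taking cohomology and using that $\Hc^\bullet(\mathbb{G}_m,\Lambda)$ is a free $\Lambda$-module concentrated in degrees $1$ and $2$, one obtains
\[
{}^*\mathrm{R}_{L_J}^G \bigl(\Hc^i(\X,\Lambda)\bigr) \,\simeq\, \Hc^{i-1}(\X_J,\Lambda)\oplus \Hc^{i-2}(\X_J,\Lambda),
\]
exactly as in the proof of Proposition \ref{2prop1}. (Here I use that $|U_J|$ is a $p$-power and hence invertible in $\Lambda$, so coinvariants and invariants agree and ${}^*\mathrm{R}_{L_J}^G(M)=M^{U_J}$ commutes with the computation of cohomology.)

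Next I would apply Proposition \ref{2prop1} to the proper $\phi$-stable subset $J$: both $\Hc^{i-1}(\X_J,\Lambda)$ and $\Hc^{i-2}(\X_J,\Lambda)$ are torsion-free, so ${}^*\mathrm{R}_{L_J}^G\bigl(\Hc^i(\X,\Lambda)\bigr)$ is a torsion-free $\Lambda L_J$-module. Consequently the image of the torsion submodule $\Hc^i(\X,\Lambda)_\mathrm{tor}$ under ${}^*\mathrm{R}_{L_J}^G$ sits inside a torsion-free module while being itself torsion, hence is zero. Since this holds for every proper $\phi$-stable $J\subsetneq \Delta$, $\Hc^i(\X,\Lambda)_\mathrm{tor}$ has trivial Harish-Chandra restriction to every proper standard Levi, i.e.\ it is cuspidal, as required.

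There is no serious obstacle here: all the hard work is already contained in Proposition \ref{2prop1} and in Lusztig's quotient isomorphism $U_J\backslash \X \simeq \X_J\times\mathbb{G}_m$. The only point that requires a little care is the compatibility of ${}^*\mathrm{R}_{L_J}^G$ with taking cohomology, which reduces to the fact that $\ell\nmid |U_J|$ so that fixed points and coinvariants coincide and the derived tensor product in Proposition \ref{1prop1}(ii) becomes an ordinary tensor product on each cohomology group.
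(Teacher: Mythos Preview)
Your argument is essentially the one the paper intends: the corollary is stated there as an immediate consequence of Proposition~\ref{2prop1} together with Lusztig's isomorphism $U_J\backslash \X \simeq \X_J \times \G_m$, and you have simply unpacked that sentence. One small imprecision: the isomorphism $U_J\backslash \X \simeq \X_J \times \G_m$ from \cite[Corollary 2.10]{Lu} is for $J$ a \emph{maximal} proper $\phi$-stable subset of $\Delta$ (removing a single $\phi$-orbit gives a single $\G_m$ factor); for arbitrary proper $J$ one gets $(\G_m)^{r-r_J}$ instead. This does not affect your conclusion, since cuspidality is already guaranteed once ${}^*\mathrm{R}_{L_J}^G$ vanishes for every maximal $J$ (by transitivity of Harish-Chandra restriction), and in any case the cohomology of $(\G_m)^{r-r_J}$ is still free over $\Lambda$.
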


We can therefore reduce  the problem of finding the torsion in the cohomology of $\X$ to the problem of finding where cuspidal composition factors can occur in the $kG$-modules $\Hc^i(\X,k)$. This is the general approach that we shall use throughout this paper. It is justified by the fact that if we assume that the modules $\Hc^i(\X,\Lambda)$ are torsion-free and that Conjecture  \hyperref[2conj1]{(HLM)} holds, then by \cite{Du3} cuspidal composition factors can occur only in $\Hc^r(\X,k)$. 

\subsection{Reduction: from \texorpdfstring{$\Y$}{Y} to \texorpdfstring{$\X$}{X}\label{2se3}}

In this section we shall give conditions on $\X$ ensuring that the principal part of the cohomology of $\Y$ is torsion-free.  Unlike $\Y$, the variety $\X$ has a smooth compactification (the compactification of $\Y$ constructed in \cite{BR4} is only rationally smooth in general). Therefore we can use duality theorems to study precise concentration properties of cuspidal modules in the cohomology of $\X$ (see Sections \ref{2se4} and \ref{3se1}).  
We will not only restrict the cohomology of $\Y$ to the principal block $b$ of $\Lambda G$ but also to the principal block $b'$ of $\Lambda \T^{cF}$. This is not a strong restriction since the complexes $b\Rgc(\Y,\Lambda)$ and $b\Rgc(\Y,\Lambda)b'$ are conjecturally isomorphic in $K^b(\Lambda G$-$\mathrm{Mod})$. Note that this is already the case when the coefficient ring is $K$ (see \cite[Theorem 5.24]{BMM}). By Proposition \ref{1prop1} we have

\centers{$ \Rgc(\Y,\Lambda) b' \, \simeq \,  \Rgc(\Y,\Lambda)\otimes_{\Lambda T_{\ell'}} \Lambda \, \simeq \,  \Rgc(\Y/T_{\ell '},\Lambda) $}

\noindent where $T_{\ell'}$ is the $\ell'$-component of the abelian group $\T^{cF}$. The quotient variety $\Y/T_{\ell'}$ will be denoted by $\Y_\ell$. 

\sk

From now on, we shall work exclusively with $\Y_\ell$ instead of $\Y$ . This will not have any effect on the results we have deduced from $\mathrm{(W)}$ and $\mathrm{(S)}$ in \cite{Du3}. We start by proving an analog of \cite[Corollary 2.10]{Lu} for the variety $\Y_\ell$:

\begin{prop}\label{2prop2}Let $I$ be a maximal proper $\phi$-stable subset of $\Delta$. Then there exists an isomorphism of $\Lambda$-modules

\centers{$\Hc^{i}(U_I \backslash \Y_\ell, \Lambda) \, \simeq \, {}^*\mathrm{R}_{L_I}^G\big(\Hc^{i}(\Y_\ell, \Lambda)  \big) \, \simeq   \, \Hc^{i-1}(\X_I, \Lambda) \oplus \Hc^{i-2}(\X_I, \Lambda).$}

\end{prop}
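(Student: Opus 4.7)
The first isomorphism follows directly from Proposition \ref{1prop1} (ii). Since $U_I = \U_I^F$ is a finite $p$-group and $\ell \neq p$, every stabilizer of a point of $\Y_\ell$ for its $U_I$-action has order invertible in $\Lambda$, and Proposition \ref{1prop1} (ii) then gives
\[ \Rgc(U_I \backslash \Y_\ell, \Lambda) \, \simeq \, \Lambda \, {\ol}_{\Lambda U_I} \, \Rgc(\Y_\ell,\Lambda). \]
Because $|U_I|$ is invertible in $\Lambda$, the algebra $\Lambda U_I$ is semi-simple, the derived tensor product reduces to the (exact) coinvariants functor, and the norm map canonically identifies coinvariants with invariants. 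Passing to $i$-th cohomology therefore produces the first identification, with ${}^*\mathrm{R}_{L_I}^G\big(\Hc^i(\Y_\ell,\Lambda)\big) = \Hc^i(\Y_\ell,\Lambda)^{U_I}$.

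The second isomorphism will come from a geometric isomorphism $U_I \backslash \Y_\ell \simeq \X_I \times \G_m$: combined with the K\"unneth formula (Proposition \ref{1prop1} (i)) and the fact that $\Hc^q(\G_m,\Lambda)$ equals $\Lambda$ for $q \in \{1,2\}$ and vanishes otherwise, this immediately yields the required decomposition $\Hc^{i-1}(\X_I,\Lambda) \oplus \Hc^{i-2}(\X_I,\Lambda)$. To construct the geometric isomorphism I plan to imitate Lusztig's argument for \cite[Corollary 2.10]{Lu} (which underpins Proposition \ref{2prop1}) but lifted to $\Y$. Writing $c = c_I \cdot s$ with $\ell(c) = \ell(c_I)+1$, where $s$ is the simple reflexion coming from the chosen representative of the unique $\phi$-orbit contained in $\Delta \smallsetminus I$, the length-additive factorisation $\U \dot c \U = \U \dot c_I \dot s \U$ together with the Bruhat stratification of $\Y$ should yield, after taking the $U_I$-quotient, a $(L_I \times T^{cF})$-equivariant description of $U_I \backslash \Y$ as an amalgamated product $\Y_I \times_{T_I^{c_I F}} \mathcal{Z}$, where $\mathcal{Z}$ is a one-dimensional torus cell encoding the contribution of $\dot s$.

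The reduction from $\Y$ to $\Y_\ell$ is then governed by the Coxeter-case hypothesis. All degrees of $(W_I,F)$ are strictly smaller than the Coxeter number $h$ of $(W,F)$, and the assumption on $\ell$ recalled in Section \ref{2se1} --- $\ell$ coprime to $|W^F|$ and dividing only $\Phi_h(q)$ --- forces $\ell$ to be coprime to $|T_I^{c_I F}|$. Hence $T_{I,\ell} = 1$, so $\Y_I/T_{I,\ell'} = \X_I$, while the residual $T_\ell$-action on $\mathcal{Z}$ is via a character, so $\mathcal{Z}/T_\ell$ is still isomorphic to $\G_m$ as a variety. Combining these two observations gives $U_I \backslash \Y_\ell \simeq \X_I \times \G_m$, and the K\"unneth formula finishes the proof. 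The principal technical obstacle is the construction of the equivariant decomposition $U_I \backslash \Y \simeq \Y_I \times_{T_I^{c_I F}} \mathcal{Z}$: Lusztig treats only the analogue at the level of $\X$, and transferring the argument to $\Y$ requires careful bookkeeping of the torus lifts, in particular verifying that the Bruhat strata are compatible with the constraint $g^{-1}F(g) \in \U \dot c \U$ (rather than $\B c \B$) and that the amalgamation matches the right $T^{cF}$-action.
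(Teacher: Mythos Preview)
Your handling of the first isomorphism is fine. The second is where the proposal breaks down.

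You are aiming for a \emph{geometric} isomorphism $U_I \backslash \Y_\ell \simeq \X_I \times \G_m$, obtained by lifting Lusztig's decomposition $U_I \backslash \X \simeq \X_I \times \G_m$ to $\Y$ and then dividing by $T_{\ell'}$. The paper states explicitly that this is what \emph{fails}: the product decomposition cannot be lifted to $\Y_\ell$, only to an abelian covering. Two concrete obstructions to your plan are worth naming. First, already at the $\X$-level Lusztig's isomorphism is only $V_I$-equivariant, not $L_I$-equivariant, so your asserted $(L_I \times T^{cF})$-equivariant amalgamated product $U_I\backslash \Y \simeq \Y_I \times_{T_I^{c_IF}} \mathcal Z$ is too strong even before lifting. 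Second, the Galois cover $\Y \to \X$ does not factor compatibly with the product $\X_I \times \G_m$: the image of the relevant $\mu_{m_j}$ inside $\T^{cF}$ is only the subgroup $N_c(Y_{c,c_I})$, and the residual quotient $\T^{cF}/N_c(Y_{c,c_I})$ intertwines the two factors. Your heuristic ``$\mathcal Z/T_\ell \simeq \G_m$'' ignores this intertwining.

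The paper's route is genuinely different. One passes via Abhyankar's lemma to an abelian cover $\widetilde\Y \to U_I\backslash \Y^\circ$ which \emph{does} split as $\widetilde\Y_I \times \G_m$, and then expresses $\Rgc(U_I\backslash \Y,\Lambda)$ through the two-term complex $Z_{\T^{cF}}(\mu_{m_j})$. After taking $T_{\ell'}$-coinvariants one obtains only a distinguished triangle
\[
\Rgc(\X_I,\Lambda)[-1] \longrightarrow \Rgc(U_I\backslash \Y_\ell,\Lambda) \longrightarrow \Rgc(\X_I,\Lambda)[-2] \rightsquigarrow
\]
and \emph{not} a product. The direct-sum statement is recovered at the level of cohomology by showing that the connecting maps $\Hc^{i-2}(\X_I,\Lambda)\to \Hc^{i}(\X_I,\Lambda)$ vanish: they vanish after $\otimes K$, and then Proposition~\ref{2prop1} (torsion-freeness of $\Hc^\bullet(\X_I,\Lambda)$ for proper $I$) forces them to vanish over $\Lambda$. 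This use of Proposition~\ref{2prop1} is the key step your argument bypasses, and without it the triangle need not split.
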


\begin{proof}  Recall that the isomorphism $U_I \backslash \X \simeq \X_I \times \G_m$ constructed in \cite{Lu} is not $G$-equivariant. We can check that it is nevertheless $V_I$-equivariant, where $\V_I = \U\cap \L_I$ is a maximal unipotent subgroup of $\L_I$. We argue as in \cite{BR2}: the  isomorphism cannot be lifted up to $\Y_\ell$ but only to an abelian covering of this variety. Let $\Y^\circ$ be the preimage in $\Y$ of a connected component of $U \backslash \Y$. It is stable by the action of $U$ but not necessarily by the action of $\T^{cF}$ which permutes transitively the connected components of $U \backslash \Y$ (since $U \backslash \Y / \T^{cF} \simeq U \backslash \X$ is connected). If we denote by $H$ the stabilizer of the component $U\backslash \Y^\circ$ in $\T^{cF}$, we have the following $V_I \times (\T^{cF})^{\mathrm{op}}$-equivariant isomorphism
\begin{equation} \label{2eq1} U_I \backslash \Y^\circ \times_{H} \T^{cF} \, \simeq \, U_I \backslash \Y. \end{equation}
\noindent By Abhyankar's lemma, we can now lift the Galois covering  $U \backslash \Y^\circ \longrightarrow U \backslash \X \simeq (\G_m)^r$ up to a covering $\varpi : (\G_m)^r \longrightarrow (\G_m)^r$ with Galois group $\prod \mu_{m_i}$. Details of this construction are given in \cite{BR2} and \cite{Du1}. We define the variety $\widetilde \Y$ using  the following diagram in which all the squares are cartesian: 
\begin{equation}\label{2diag1} \hskip-4.5cm \begin{psmatrix}
[colsep=6mm]
[mnodesize=2cm]  & \widetilde{\Y}    & & &
 (\G_m)^r 
 &  \\[0pt]  [mnodesize=2cm]   & [mnodesize=2.4cm]  U_I \backslash \Y^\circ   & & &  U \backslash \Y^\circ  
 \\[0pt] [mnodesize=2cm]     & [mnodesize=2.4cm] U_I \backslash \X & & & U \backslash \X  & 
 [mnodesize=0pt] \hskip -3.8mm \simeq \ (\G_m)^r
\psset{arrows=->>,nodesep=3pt} 
\everypsbox{\scriptstyle} 
\ncline{1,5}{2,5}<{\varpi}>{/ N}  \ncarc[arcangle=25]{1,5}{3,6}>{/ \prod \mu_{m_i}}
\ncline{2,2}{2,5}  
 \ncline{3,2}{3,5}
 \ncline{2,2}{3,2}<{\pi^\circ}>{/H} \ncline{2,5}{3,5}<{\pi^\circ}>{/H}  
 \ncline{1,2}{2,2}>{/ N}
 \ncline{1,2}{1,5} 
\end{psmatrix}\end{equation}
\noindent Under the isomorphism  $U_I \backslash \X \simeq \X_I \times \G_m$ the map $U_I \backslash \X \longrightarrow (\G_m)^r$ decomposes into $\pi_I \times \mathrm{id}_{\G_m}$ where $\pi_I : \X_I \longrightarrow V_I \backslash \X_I \simeq (\G_m)^{r-1}$ is the quotient map associated with $\X_I$. In particular, we can form the following fiber product 
\begin{equation}\label{2diag2} \hskip-1.2cm  \begin{psmatrix}
[colsep=2.6cm]
 \widetilde{\Y}_I & (\G_m)^{r-1} \\ 
\X_I & (\G_m)^{r-1} 
\psset{arrows=->>,nodesep=3pt} 
\everypsbox{\scriptstyle} 
\ncline{1,1}{1,2} 
\ncline{1,1}{2,1}>{\big/ \mathop{\prod}\limits_{i\neq j} \mu_{m_i}} 
\ncline{1,2}{2,2}>{\big/ \mathop{\prod}\limits_{i\neq j}  \mu_{m_i}} 
\ncline{2,1}{2,2}^{\pi_I}
\end{psmatrix}\end{equation}
\noindent in order to decompose the variety $\widetilde \Y$ into a product $ \widetilde{\Y} \, \simeq \, \widetilde{\Y}_I \times \G_m$ which is compatible with the group decomposition $\prod \mu_{m_i} \, \simeq \, \big(\prod_{i \neq j} \mu_{m_i}\big) \times \mu_{m_j}$.   

\sk

Recall from \cite{BR2} that the cohomology of $\G_m$ endowed with the action of $\mu_m$ can be represented by a complex concentrated in degrees $1$ and $2$:

\centers{$0 \longrightarrow \Lambda \mu_m \mathop{\longrightarrow}\limits^{\zeta-1} \Lambda \mu_m \longrightarrow 0$} 

\noindent where $\zeta$ is any primitive $m$-th root of $1$. Such a complex will be denoted by $Z(\mu_m)[-1]$ so that the cohomology of $Z(\mu_m)$ vanishes outside the degrees $0$ and $1$. As in \cite[Section 3.3.2]{BR2}, we shall write $Z_{\T^{cF}}(\mu_m)$ for $Z(\mu_m) \otimes_{\Lambda \mu_m} \Lambda \T^{cF}$.

\sk

Using Proposition \ref{1prop1}, we can now express the previous constructions in cohomological terms. Equation \ref{2eq1} together with Diagram \ref{2diag1} leads to

\centers{$ \begin{array}{r@{\, \ \simeq \, \ }l} \Rgc(U_I \backslash \Y, \Lambda) & \Rgc(U_I \backslash \Y^\circ, \Lambda) \, {\ol}_{\Lambda H} \, \Lambda \T^{cF} \\[3pt] & \Rgc(\widetilde{\Y}, \Lambda) \, {\ol}_{\Lambda \prod   {\mu}_{m_i} } \, \Lambda \T^{cF}\end{array} $}

\noindent Using the isomorphism $\widetilde{\Y} \, \simeq \, \widetilde{\Y}_I \times \G_m$ and the notation that we have introduced, it can be written as 
\begin{equation} \begin{array}[c]{r@{\, \ \simeq \, \ }l}  \Rgc(U_I \backslash \Y, \Lambda) &\Big(\Rgc(\widetilde{\Y}_I, \Lambda) \, {\ol}_{\Lambda  \mathop{\prod}\limits_{i\neq j}  {\mu}_{m_i}} \, \Lambda \T^{cF} \Big) \, {\ol}_{\Lambda \T^{cF}} \, \Big( \Rgc(\G_m,\Lambda) \, {\ol}_{\Lambda  {\mu}_{m_j}} \, \Lambda \T^{cF} \Big) \\[4pt]
& \Big(\Rgc(\widetilde{\Y}_I, \Lambda) \, {\ol}_{\Lambda  \mathop{\prod}\limits_{i\neq j}  {\mu}_{m_i}} \, \Lambda \T^{cF} \Big) \, {\ol}_{\Lambda \T^{cF}} \, Z_{\T^{cF}} ({\mu}_{m_j})[-1] \end{array} \label{2eq2}\end{equation}
\noindent in the derived category $D^b(\mathrm{Mod}$-$\Lambda \T^{cF})$.

\sk

In general, the cohomology groups of the complex $Z_{\T^{cF}} ({\mu}_{m_j})$ are endowed with a non-trivial action of $\T^{cF}$ since the map ${\mu}_{m_j} \longrightarrow \T^{cF}$ is not surjective. One can actually compute explicitly the image of this map: by  \cite[Proposition 3.5]{BR2} it is precisely the group $N_c(Y_{c,c_I})$ (see \cite[Section 4.4.2]{BR1} for the definition). Therefore we obtain

\centers{$\mathrm{H}^0(Z_{\T^{cF}} ({\mu}_{m_j})) = \mathrm{H}^1(Z_{\T^{cF}} ({\mu}_{m_j})) \, \simeq \, \Lambda \big[\T^{cF} / N_c(Y_{c,c_I})\big].$}

\noindent By \cite[Proposition 4.4]{BR1}, the quotient $\T^{cF} / N_c(Y_{c,c_I})$ is isomorphic to $\T^{c_I F}/N_{c_I}(Y_{c,c_I})$ which has order prime to $\ell$. In particular, the image of the map ${\mu}_{m_j} \longrightarrow \T^{cF}$ contains the $\ell$-Sylow subgroup of $\T^{cF}$ and the map ${\mu}_{m_j} \longrightarrow \T^{cF}/T_{\ell'}$ is definitely onto. Since $Z_{\T^{cF}} ({\mu}_{m_j})$ fits into the following distinguished triangle in $D^b(\Lambda \T^{cF}$-$\mathrm{Mod})$ 

\centers{$ \Lambda\big[\T^{cF} / N_c(Y_{c,c_I}) \big] \longrightarrow  Z_{\T^{cF}} ({\mu}_{m_j}) \longrightarrow \Lambda\big[\T^{cF} / N_c(Y_{c,c_I})\big][-1] \rightsquigarrow $}

\noindent we deduce that the coinvariants under $T_\ell$ have a relatively simple shape
\begin{equation}\label{2eq3}  \Lambda \longrightarrow  Z_{\T^{cF}} ({\mu}_{m_j}) \, {\ol}_{\Lambda T_{\ell'}} \, \Lambda  \longrightarrow \Lambda[-1] \rightsquigarrow \end{equation}
\noindent Together with the expression of $\Rgc(\widetilde{\Y}_I, \Lambda) \, {\ol}_{\Lambda  \mathop{\prod}\limits_{i\neq j}  {\mu}_{m_i}} \, \Lambda \T^{cF}$ given in Formula \ref{2eq2}, this triangle yields

\centers{$ \Rgc(\widetilde{\Y}_I, \Lambda) \, {\ol}_{\Lambda  \mathop{\prod}\limits_{i\neq j}  {\mu}_{m_i}} \,\Lambda[-1] \longrightarrow \Rgc(U_I \backslash \Y_\ell,\Lambda) \longrightarrow\Rgc(\widetilde{\Y}_I, \Lambda) \, {\ol}_{\Lambda  \mathop{\prod}\limits_{i\neq j}  {\mu}_{m_i}} \, \Lambda[-2] \rightsquigarrow $}

\noindent and then simply

\centers{$ \Rgc(\X_I, \Lambda)[-1] \longrightarrow \Rgc(U_I \backslash \Y_\ell,\Lambda) \longrightarrow\Rgc(\X_I, \Lambda) [-2] \rightsquigarrow $}

\noindent by definition of $\widetilde \Y_I$.

\sk

We claim that the connecting maps $\Hc^{i-2}(\X_I,\Lambda) \longrightarrow \Hc^i(\X_I,\Lambda)$ coming from the long exact sequence associated with the previous triangle are actually zero. Since the triangle \ref{2eq3} splits over $K$, the scalar extension of these morphisms are zero. But by Proposition \ref{2prop1} the modules $\Hc^i(\X_I,\Lambda)$ are torsion-free, so that the connecting morphisms are indeed zero over $\Lambda$. Consequently, we obtain short exact sequences

\centers{$ 0 \longrightarrow \Hc^{i-1}(\X_I,\Lambda) \longrightarrow \Hc^{i}(\Y_\ell,\Lambda)^{U_I} \longrightarrow \Hc^{i-2}(\X_I,\Lambda) \longrightarrow 0 $}

\noindent which finishes the proof.\end{proof}

\begin{cor}\label{2cor2}The torsion part of any cohomology group $\Hc^i(\Y_\ell,\Lambda)$ is a cuspidal $\Lambda G$-module.
\end{cor}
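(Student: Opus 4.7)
The plan is to reduce the cuspidality of the torsion submodule to a direct consequence of the previous two propositions. Recall that a $\Lambda G$-module $M$ is cuspidal when $^*\mathrm{R}_{L_I}^G(M) = 0$ for every proper $\phi$-stable subset $I \subsetneq \Delta$. By transitivity of Harish-Chandra restriction it is enough to check this for $I$ maximal, so I would fix a maximal proper $\phi$-stable $I \subset \Delta$ and consider the short exact sequence
\begin{equation*}
0 \longrightarrow \Hc^i(\Y_\ell,\Lambda)_{\mathrm{tor}} \longrightarrow \Hc^i(\Y_\ell,\Lambda) \longrightarrow \Hc^i(\Y_\ell,\Lambda)/\mathrm{tor} \longrightarrow 0.
\end{equation*}

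Since $\ell \neq p$, the order of $U_I$ is prime to $\ell$, hence the fixed-point functor $(-)^{U_I} = {}^*\mathrm{R}_{L_I}^G$ is exact on $\Lambda G$-$\mathrm{mod}$. Applying it yields an injection $^*\mathrm{R}_{L_I}^G\big(\Hc^i(\Y_\ell,\Lambda)_{\mathrm{tor}}\big) \hookrightarrow {}^*\mathrm{R}_{L_I}^G\big(\Hc^i(\Y_\ell,\Lambda)\big)$, and of course the left-hand side is still a $\Lambda$-torsion module.

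By Proposition \ref{2prop2} the right-hand side is isomorphic to $\Hc^{i-1}(\X_I,\Lambda) \oplus \Hc^{i-2}(\X_I,\Lambda)$, and since $I$ is a proper $\phi$-stable subset of $\Delta$, Proposition \ref{2prop1} ensures that this direct sum is torsion-free over $\Lambda$. A torsion submodule of a torsion-free module is zero, so ${}^*\mathrm{R}_{L_I}^G\big(\Hc^i(\Y_\ell,\Lambda)_{\mathrm{tor}}\big) = 0$ for every maximal proper $\phi$-stable $I$, which is exactly the cuspidality of $\Hc^i(\Y_\ell,\Lambda)_{\mathrm{tor}}$.

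The argument is essentially immediate once Propositions \ref{2prop1} and \ref{2prop2} are in place; there is no real obstacle beyond invoking the exactness of Harish-Chandra restriction in the non-describing characteristic setting. The only point requiring care is the passage from maximal proper Levis to all proper Levis, handled automatically by the transitivity ${}^*\mathrm{R}_L^G = {}^*\mathrm{R}_L^{L_I} \circ {}^*\mathrm{R}_{L_I}^G$ whenever $L \subset L_I \subsetneq G$.
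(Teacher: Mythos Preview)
Your proof is correct and follows exactly the approach the paper intends: the corollary is stated without proof, as an immediate consequence of Propositions~\ref{2prop1} and~\ref{2prop2}, and the reasoning you give is precisely the one already spelled out in the analogous step inside the proof of Proposition~\ref{2prop1} (where the same argument shows that $\Hc^\bullet(\X_I,\Lambda)_{\mathrm{tor}}$ is cuspidal). There is nothing to add.
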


As a consequence, Assumption $\mathrm{(S)}$ holds for $\Y_\ell$ if every cuspidal module in the block occurs in the cohomology group in middle degree only. By construction of $\Y_\ell$, it is actually sufficient to consider the cohomology of $\X$:

\begin{cor}\label{2cor3}Assume that one of the following holds:

\begin{enumerate} 

\item[$\mathrm{(1)}$] The cohomology of $\X$ over $\Lambda$ is torsion-free and Conjecture  \hyperref[2conj1]{\emph{(HLM)}} holds.

\item[$\mathrm{(2)}$] Cuspidal composition factors occur in $\Hc^i(\X,k)$ for $i=r$ only.

\end{enumerate}

\noindent Then the cohomology of $\Y_\ell$ over $\Lambda$ is torsion-free.
\end{cor}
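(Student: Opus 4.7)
The plan is first to reduce the statement to hypothesis (2) and then to handle the torsion degree by degree, the main difficulty lying in the degrees strictly above the middle one.

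The reduction (1) $\Rightarrow$ (2) follows from the paragraph preceding the corollary: by the results of \cite{Du3} cited there, torsion-freeness of $\Hc^\bullet(\X,\Lambda)$ together with (HLM) forces cuspidal composition factors of $\Hc^i(\X,k)$ to appear only in the middle degree $i=r$. Thus we may assume (2) throughout. Observe next that $\Y_\ell$ is the free quotient of the smooth affine variety $\Y$ of dimension $r$ by the action of $T_{\ell'}$, hence is itself smooth and affine of dimension $r$. By Artin's vanishing theorem, $\Hc^i(\Y_\ell,k)=0$ for $i<r$; via the universal coefficient theorem (Theorem \ref{1thm1}), this immediately gives $\Hc^i(\Y_\ell,\Lambda)=0$ for $i<r$ and $\mathrm{Tor}^\Lambda_1(\Hc^r(\Y_\ell,\Lambda),k)=0$, which means $\Hc^r(\Y_\ell,\Lambda)$ is torsion-free.

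For $i > r$, the plan is to argue by contradiction. By Corollary \ref{2cor2}, any torsion in $\Hc^i(\Y_\ell,\Lambda)$ would be cuspidal, and the universal coefficient theorem then produces a cuspidal composition factor $L$ of $\Hc^i(\Y_\ell,k)$. Lemma \ref{1lem1} translates this as $H^i(M^\bullet) \neq 0$, where $M^\bullet := \mathrm{RHom}^\bullet_{kG}(P_L, \Rgc(\Y_\ell,k))$. Since $T_\ell$ acts freely on $\Y_\ell$, one can represent $\Rgc(\Y_\ell,k)$ by a bounded complex of finitely generated $k[G\times T_\ell]$-modules that are projective over $kT_\ell$; applying $\mathrm{Hom}_{kG}(P_L,-)$ then realises $M^\bullet$ as a bounded complex of finitely generated projective $kT_\ell$-modules. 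Proposition \ref{1prop1}(ii) gives $\Rgc(\X,k) \simeq k \ol_{kT_\ell} \Rgc(\Y_\ell,k)$, so that
\[ M^\bullet \otimes_{kT_\ell} k \;\simeq\; \mathrm{RHom}^\bullet_{kG}(P_L, \Rgc(\X,k)), \]
whose cohomology is concentrated in degree $r$ by hypothesis (2) and Lemma \ref{1lem1}.

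The hard part is to transfer this concentration from $M^\bullet \otimes_{kT_\ell} k$ back to $M^\bullet$ itself; this is where the $\ell$-structure of $T_\ell$ becomes decisive. Since $T_\ell$ is a finite $\ell$-group and $\mathrm{char}\,k=\ell$, the algebra $kT_\ell$ is local Artinian. One can therefore replace $M^\bullet$ by its minimal representative among bounded complexes of projective $kT_\ell$-modules, namely the one whose differentials take values in the maximal ideal. The reduction of this minimal complex modulo the maximal ideal has zero differential, so its cohomology coincides termwise with the complex itself; the vanishing outside degree $r$ forces $M^i \otimes_{kT_\ell} k = 0$ for $i\neq r$, whence $M^i=0$ by Nakayama's lemma. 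We conclude $H^i(M^\bullet)=0$ for $i\neq r$, contradicting the existence of $L$ in degree $i>r$ and completing the proof.
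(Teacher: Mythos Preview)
Your proof is correct and reaches the same conclusion as the paper, but the mechanism you use to pass from $\X$ to $\Y_\ell$ is different.

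The paper argues by d\'evissage on the sheaf $(\pi_\ell)_*\underline{k}_{\Y_\ell}$: since $T_\ell$ is an $\ell$-group, this pushforward admits a filtration whose successive quotients are all the constant sheaf $\underline{k}_\X$, yielding a tower of distinguished triangles $C_{i+1}\to C_i\to \Rgc(\X,k)\rightsquigarrow$ with $C_0=\Rgc(\Y_\ell,k)$. Applying $\mathrm{RHom}_{kG}^\bullet(P_L,-)$ and climbing the tower by induction transfers the concentration in degree $r$ from the bottom to the top. Your route instead packages the whole tower into a single Nakayama step: you recognise $M^\bullet=\mathrm{RHom}_{kG}^\bullet(P_L,\Rgc(\Y_\ell,k))$ as a perfect complex over the local ring $kT_\ell$, observe that $M^\bullet\otimes_{kT_\ell}^{\mathrm L}k$ is concentrated in degree $r$, and conclude via the minimal-complex argument. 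Both approaches rest on the same fact---that $kT_\ell$ is local---so they are morally equivalent; yours is a cleaner, more derived-categorical phrasing, while the paper's is more geometric and sidesteps the perfectness verification.

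One point you pass over quickly: the claim that applying $\mathrm{Hom}_{kG}(P_L,-)$ to a complex of $k[G\times T_\ell]$-modules projective over $kT_\ell$ yields a complex of projective $kT_\ell$-modules is not automatic from $kT_\ell$-projectivity alone. It holds here because the terms are summands of permutation modules $k[(G\times T_\ell)/H]$ with $H\cap T_\ell=1$, and $P_L$ is projective over $kG$ (hence over every subgroup algebra); alternatively, one checks directly that $P_L^*\otimes_{kG}-$ commutes with $-\otimes_{kT_\ell}^{\mathrm L}k$ and that the resulting complex has bounded cohomology, forcing perfectness over the finite-dimensional local algebra $kT_\ell$. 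Either way the step is sound, but it deserves a sentence of justification.
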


\begin{proof} We have already mentioned at the end of Section \ref{2se2} that assertions $\mathrm{(1)}$ and $\mathrm{(2)}$ are equivalent. Denote by $\pi_\ell : \Y_\ell \longrightarrow \X$ the quotient map by the $\ell$-Sylow subgroup of $\T^{cF}$. The push-forward of the constant sheaf $\underline k_{\Y_\ell}$ on $\Y_\ell$ is obtained from  successive extensions of constant sheaves $\underline k_{\X}$:

\centers{$ (\pi_\ell)_* (\underline k_{\Y_\ell}) \, \simeq  \, 
\hskip-1.3mm \begin{array}{c} \underline  k_\X \\ \underline  k_\X \\ \vdots \\ \underline  k_\X \end{array} \hskip-1.3mm 
\ \ \cdot$}

\noindent In the derived category, the complex $\Rgc(\Y_\ell,k)  = \Rgc(\X,(\pi_\ell)_* (\underline k_{\Y_\ell}))$ can also be obtained from extensions of $\Rgc(\X,k)$'s. In other words, there exists a family of complexes $\Rgc(\Y_\ell,k) = C_0, C_1, \ldots, C_n = \Rgc(\X,k)$ that fit into the following distinguished triangles in $D^{b}(kG$-$\mathrm{Mod}$-$kT_\ell)$: 

\centers{$ C_{i+1} \longrightarrow C_{i} \longrightarrow \Rgc(\X,k) \rightsquigarrow $}

\noindent If we apply the functor $\mathrm{RHom}_{kG}^\bullet(P,-)$ to each of these triangles,  we can deduce that the complexes  $\mathrm{RHom}_{kG}^\bullet(P,C_i)$ are concentrated in degree $r$ whenever  $\mathrm{RHom}_{kG}^\bullet(P,C_n)$ is. Taking $P$ to be the projective cover of any cuspidal $kG$-module, we conclude that cuspidal composition factors of the cohomology of $\Y_\ell$ can only occur in middle degree. \end{proof}

\subsection{On the assumption \texorpdfstring{$\mathrm{(W)}$}{W} for \texorpdfstring{$\X$}{X}\label{2se4}}

We give here a proof of the analog of the assumption $\mathrm{(W)}$ for the Deligne-Lusztig variety  $\X$, using the smooth compactification $\overline \X$ constructed in \cite{DeLu}.

\begin{prop}\label{2prop3}If  $\lambda$ is an eigenvalue of $F^\delta$ on $\Hc^r(\X,K)$, then the $\Lambda$-modules $\Hc^i(\X,\Lambda)_{(\lambda)}$ are all torsion-free.
\end{prop}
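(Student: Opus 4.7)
The plan is to adapt the compactification argument from the proof of Proposition \ref{2prop1} to the finer setting of generalized eigenspaces of $F^\delta$. The crucial new tool is the smooth compactification $\overline{\X}$ of $\X$ from \cite{DeLu}, whose existence is precisely what distinguishes $\X$ from $\Y$ and is the reason $\mathbf{(W)}$ can be established here for $\X$ unconditionally. Together with the Lusztig filtration $\overline{\X} = D_r \supset D_{r-1} \supset \cdots \supset D_0 = G/B$ already used in the proof of Proposition \ref{2prop1}, it reduces the problem to the torsion-freeness already known for the proper sub-varieties $\X_J$.

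First I would show that $\mathrm{H}^i(\overline{\X},\Lambda)$ is torsion-free. The successive graded pieces of the filtration are, by \cite[Corollary 2.10]{Lu}, induced modules of the form $\mathrm{R}_{L_J}^G\bigl(\Hc^\bullet(\X_J,\Lambda)\bigr)$ for proper $\phi$-stable $J \subsetneq \Delta$. By Proposition \ref{2prop1} the $\Hc^\bullet(\X_J,\Lambda)$ are torsion-free; Harish-Chandra induction preserves this since $\Lambda[G/U_J]$ is a free right $\Lambda L_J$-module. Running up the filtration, the connecting maps in the long exact sequences attached to each pair $(D_a,D_{a-1})$ must vanish: over $K$ the filtration splits degree by degree by semisimplicity, so any $\Lambda$-linear connecting map would land in the torsion part of a torsion-free module and must be zero. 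Consequently $\mathrm{H}^\bullet(\overline{\X},\Lambda)$ is torsion-free.

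The same reasoning, applied to the stratification of the boundary $\partial\X = \overline{\X} \setminus \X$ by $G$-orbits of Deligne-Lusztig varieties $\X(w)$ for $w < c$ (each of which reduces to an induction from a proper Levi subgroup), shows that $\mathrm{H}^\bullet(\partial\X,\Lambda)$ is torsion-free. Applying the exact functor of generalized $(\lambda)$-eigenspaces to the open-closed distinguished triangle
\begin{equation*}
\Rgc(\X,\Lambda) \longrightarrow \Rgc(\overline{\X},\Lambda) \longrightarrow \Rgc(\partial\X,\Lambda) \rightsquigarrow
\end{equation*}
then produces a long exact sequence whose outer terms $\mathrm{H}^i(\overline{\X},\Lambda)_{(\lambda)}$ and $\mathrm{H}^i(\partial\X,\Lambda)_{(\lambda)}$ are torsion-free.

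The main obstacle is the final step, since a module squeezed between torsion-free modules in a long exact sequence may nonetheless carry torsion. Here the minimality hypothesis on $\lambda$ is essential. By Lusztig's description of the $F^\delta$-action on Coxeter-type cohomology, the eigenvalues appearing on $\Hc^\bullet(\X_J,K)$ for $J \subsetneq \Delta$ are governed by the strictly smaller Coxeter numbers of the Levi subgroups $L_J$, while the assumptions $\ell \mid \Phi_h(q)$ and $\ell \nmid |W^F|$ pin down the reductions modulo $\ell$ of the minimal eigenvalues on $\Hc^r(\X,K)$. A careful comparison then shows that $\lambda$ does not occur modulo $\ell$ in the cohomology of the proper boundary strata, so that the relevant connecting maps in the $(\lambda)$-component of the long exact sequence are forced to vanish. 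Combining this with the universal coefficient theorem (Theorem \ref{1thm1}) then yields torsion-freeness of $\Hc^i(\X,\Lambda)_{(\lambda)}$ for every $i$.
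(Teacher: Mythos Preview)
Your closing step contains a genuine error. You assert that a minimal eigenvalue $\lambda$ ``does not occur modulo $\ell$ in the cohomology of the proper boundary strata.'' This is false. Lusztig's computation (the one the paper cites from \cite[Section 7]{Lu}) says precisely that for each proper $\phi$-stable $I$ the generalized $(\lambda)$-eigenspace of $F^\delta$ on $\Hc^i(\X_I,K)$ is nonzero exactly when $i = r_I = |I/\phi|$. So $\lambda$ very much lives in the boundary, just in degrees strictly below $r$. In particular the connecting map $\mathrm{H}^{i-1}(\partial\X,\Lambda)_{(\lambda)} \to \Hc^i(\X,\Lambda)_{(\lambda)}$ has a nontrivial source for $i \leq r$, and you have not ruled out that it manufactures torsion. (A smaller issue: your claim that the filtration long exact sequences split over $K$ ``by semisimplicity'' is not valid as stated; semisimplicity of $KG$-mod splits short exact sequences of modules, not distinguished triangles of complexes. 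The splitting is true here, but it comes from Lusztig's explicit eigenvalue separation, not from abstract semisimplicity.)

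The paper's proof supplies the idea you are missing: Poincar\'e duality on the smooth projective $\overline{\X}$ swaps the $(\lambda)$-eigenspace in degree $i$ with the $(\mu)$-eigenspace in degree $2r-i$, where $\mu = q^{2r\delta}\lambda^{-1}$ is the corresponding \emph{maximal} eigenvalue. Unlike $\lambda$, the eigenvalue $\mu$ genuinely does not occur in any $\Hc^\bullet(\X_I,k)$ for proper $I$, so the boundary contributes nothing to the $(\mu)$-eigenspace in any degree. One then argues entirely over $k$: for $i > r$ the $(\lambda)$-eigenspace of $\Hc^i(\X,k)$ agrees with that of $\mathrm{H}^i(\overline{\X},k)$ (since the boundary contributes only below $r$), which by duality equals the dual of the $(\mu)$-eigenspace of $\mathrm{H}^{2r-i}(\overline{\X},k) \simeq \Hc^{2r-i}(\X,k)$, and the latter vanishes because $2r-i < r$ and $\X$ is irreducible affine of dimension $r$. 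Torsion-freeness over $\Lambda$ then drops out of the universal coefficient theorem. There is no need to prove that $\mathrm{H}^\bullet(\overline{\X},\Lambda)$ itself is torsion-free.
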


\begin{proof} In  line with \cite{Lu}, we shall denote by $D=D_{r-1}(\Delta)$ the complement of $\X$ in $\overline{\X}$. Let $I$ be a $\phi$-stable subset of $\Delta$. By \cite[Section 7]{Lu}, the generalized $(\lambda)$-eigenspaces of $F^\delta$ on $\Hc^i(\X_I,K)$ are zero whenever $i \neq |I/\phi|$. By Proposition \ref{2prop1}, this remains true for cohomology groups with coefficients in $k$. From the long exact sequences
\begin{equation}\cdots \longrightarrow \displaystyle \hskip-2mm \bigoplus_{\begin{subarray}{c} I \ \phi\text{-stable} \\ |I/\phi| = a \end{subarray}} \hskip-2mm \mathrm{R}_{L_I}^G \big(\Hc^i(\X_I,k)\big) \longrightarrow \Hc^i(D_{a}(\Delta),k) \longrightarrow \Hc^i(D_{a-1}(\Delta),k) \longrightarrow \cdots \label{2eq4}
\end{equation}
\noindent we deduce that $\Hc^i(D,k)_{(\lambda)}$ is zero if $i \geq r$. Consequently, for all $i>r$ we have 

\centers{$ \Hc^i(\X,k)_{(\lambda)} \, \simeq \, \mathrm{H}^i(\overline{\X},k)_{(\lambda)}.$}

The eigenvalue $\mu = q^{2r} \lambda^{-1}$  is a "maximal" eigenvalue of $F^\delta$ and as such does not occur in the cohomology of the varieties $\X_I$ for proper subsets $I$. From the previous long exact sequences we deduce that the generalized $\bar \mu$-eigenspace of $F^\delta$ on $\Hc^i(D,k)$ is always zero. Since $\overline \X$ is a smooth variety, we can apply Poincar\'e duality (see Thereom \ref{1thm2}) in order to obtain the following isomorphisms: 

\centers{$ \Hc^i(\X,k)_{(\lambda)} \, \simeq \, 
 \mathrm{H}^i(\overline{\X},k)_{(\lambda)}  
\, \simeq \, \big(\mathrm{H}^{2r-i}(\overline{\X},k)_{(\mu)}\big)^* \, \simeq \, \big(\Hc^{2r-i}(\X,k)_{(\mu)}\big)^*.$}

\noindent Since $\X$ is an irreducible affine variety, the cohomology of $\X$ vanishes outside the degrees $r,\ldots,2r$. This proves that $\Hc^{i}(\X,k)_{(\lambda)}=0$ whenever $i > r$, and the result follows from the universal coefficient theorem.
\end{proof}

\begin{rmk} One cannot deduce that the assumption $\mathrm{(W)}$ holds for $\Y_\ell$ using this result. The method that we used in the proof of Corollary \ref{2cor3} does not preserve the generalized eigenspaces of $F^\delta$. 

\sk

Note however that Bonnaf\'e and Rouquier have constructed in \cite{BR4} a compactification $\overline \Y_\ell$ of $\Y_\ell$ such that $\overline \Y_\ell \smallsetminus \Y_\ell \simeq \overline{\X} \smallsetminus \X$. This compactification is only rationally smooth in general, but if it happened to  be $k$-smooth, the previous method would extend to $\Y_\ell$ and finish the proof of the conjecture of Hi\ss-L\"ubeck-Malle.
\end{rmk}

\section{Cuspidal composition factors in \texorpdfstring{$\Hc^\bullet(\X,k)$}{Hc(X)}\label{3se}}

We have reduced the proof of the assumption $\mathrm{(S)}$ to showing that the $kG$-modules $\Hc^i(\X,k)$ have no cuspidal composition factors unless $i=r$. In other words, the cohomology of $\mathrm{RHom}_{k G}^\bullet\big(\overline{P}_L,\Rgc(\X,k)\big)$ should vanish outside the degree $r$ whenever $L$ is cuspidal.

\sk

Throughout this section and unless otherwise specified, we shall alway assume that Conjecture  \hyperref[2conj1]{(HLM)} holds for $(\G,F)$ (which is known to be true except for the groups of type E$_7$ or E$_8$). In that case, the above result can be deduced from the characteristic zero case if we can show that the cohomology of $\mathrm{RHom}_{\Lambda G}^\bullet\big(P_L,\Rgc(\X,\Lambda)\big)$ is torsion-free. Indeed, the irreducible unipotent components of $[P_L]$ are the characters $\chi_{m_\zeta}$'s, and they occur in the cohomology of $\X$ in middle degree only. We shall divide the proof according to the depth of $\chi_{m_\zeta}$: if $\chi_{m_\zeta}$ is cuspidal, we prove that the contribution of $P_L$ to the cohomology of $\X$ and its compactification $\overline{\X}$ are the same. In the case where $\chi_{m_\zeta}$ is not cuspidal, it is induced from the cohomology of a smaller Coxeter variety and we show that the contribution of $P_L$ can be computed in terms of this cohomology, which we know to be torsion-free.

\subsection{Harish-Chandra series of length 1\label{3se1}}

We start by dealing with the case of cuspidal $kG$-modules that occur as $\ell$-reductions of cuspidal unipotent characters. By Conjecture  \hyperref[2conj1]{(HLM)}, these correspond to subtrees of the Brauer tree of the form

\begin{figure}[h] 
\centers[3]{\begin{pspicture}(2,0.3)
  \psset{linewidth=1pt}

  \cnode[fillstyle=solid,fillcolor=black](0,0){5pt}{A2}
  \cnode(0,0){8pt}{A}
  \cnode(2,0){5pt}{B}
  
  \ncline[nodesep=0pt]{A}{B}\naput[npos=-0.23]{$\phantom{\bigg(}\chi_{\mathrm{exc}}$}\naput[npos=1.08]{$\phantom{\bigg(}\chi$}\naput{$L$}

\end{pspicture}}
\end{figure}

\noindent where $\chi$ is a cuspidal unipotent character and $L$ is the unique composition factor of the $\ell$-reduction of $\chi$. By \cite[Proposition 4.3]{Lu}, the character $\chi$ occurs only in the cohomology of $\X$ in middle degree. The following is a modular analog of this result.

\begin{prop}\label{3prop1}Assume that Conjecture  \hyperref[2conj1]{\emph{(HLM)}} holds for $(\G,F)$. Let $\chi$ be a cuspidal character of $G$ occurring in $\Hc^r(\X,K)$. The $\ell$-reduction of $\chi$ gives a unique simple cuspidal $kG$-module and it occurs as a composition factor of $\Hc^i(\X,k)$ for $i=r$ only.
\end{prop}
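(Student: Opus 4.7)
\noindent\emph{Plan.} I first invoke \hyperref[2conj1]{(HLM)} to pin down the local structure of the Brauer tree near $\chi$. Since $\chi$ is a cuspidal unipotent character, its Harish-Chandra series reduces to $\{\chi\}$, so the corresponding connected component of $\Gamma^\bullet$ consists of a single vertex. Item $(\mathrm{iii})$ of \hyperref[2conj1]{(HLM)} then attaches this vertex to the exceptional node by a single edge, labelled by a simple cuspidal $kG$-module $L$. Reading the Brauer tree at $\chi$ gives $[\bar\chi]=[L]$, which establishes the uniqueness and simplicity of the $\ell$-reduction.

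For the concentration claim, by Lemma \ref{1lem1} it suffices to show that the cohomology of $\mathrm{RHom}_{\Lambda G}^\bullet(P_L,\Rgc(\X,\Lambda))$ is $\Lambda$-torsion-free and supported in degree $r$; Theorem \ref{1thm1} will then transfer the vanishing to $k$. Over $K$ the only unipotent constituent of $[P_L]$ is $\chi$ itself, which appears in $\Hc^\bullet(\X,K)$ only in degree $r$, so the rational side is immediate. The crucial observation is that $L$ cuspidal forces $P_L$ cuspidal as well: by Harish-Chandra adjunction, any simple quotient of the projective $\Lambda L_I$-module ${}^*\mathrm{R}_{L_I}^G(P_L)$ would exhibit $L$ as a composition factor of some properly induced module, which contradicts cuspidality of $L$. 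Hence $\mathrm{Hom}_{\Lambda G}(P_L,\mathrm{R}_{L_I}^G(-))=0$ for every proper $\phi$-stable $I\subset\Delta$.

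I then reduce the computation to the smooth compactification $\overline\X$ of \cite{DeLu}. Lusztig's filtration of the boundary $D=\overline\X\smallsetminus\X$ by the closed subvarieties $D_a(\Delta)$ (as used in the proof of Proposition \ref{2prop1}) presents $\mathrm{R}\Gamma(D,\Lambda)$ as successive extensions of modules of the form $\mathrm{R}_{L_I}^G(\Hc^\bullet(\X_I,\Lambda))$ with $I\subsetneq\Delta$. Exactness of $\mathrm{Hom}(P_L,-)$ combined with the vanishing above forces $\mathrm{RHom}_{\Lambda G}^\bullet(P_L,\mathrm{R}\Gamma(D,\Lambda))\simeq 0$. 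Applying $\mathrm{RHom}(P_L,-)$ to the open--closed distinguished triangle
\[ \Rgc(\X,\Lambda)\longrightarrow \mathrm{R}\Gamma(\overline\X,\Lambda)\longrightarrow \mathrm{R}\Gamma(D,\Lambda)\rightsquigarrow \]
then yields the identification
\[ \mathrm{RHom}_{\Lambda G}^\bullet(P_L,\Rgc(\X,\Lambda))\,\simeq\, \mathrm{RHom}_{\Lambda G}^\bullet(P_L,\mathrm{R}\Gamma(\overline\X,\Lambda)). \]

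The main obstacle is the final step: showing that this latter complex has torsion-free cohomology concentrated in degree $r$. Over $K$ the same cuspidality argument applied to the $K$-version of the triangle forces $\chi$ to appear in $\mathrm{H}^\bullet(\overline\X,K)$ only in degree $r$, which together with Poincar\'e duality (Theorem \ref{1thm2}) on the smooth projective $\overline\X$ controls the rational picture. To lift this to $\Lambda$, I would refine the analysis by the generalized $(\lambda)$-eigenspace decomposition of $F^\delta$, where $\lambda$ is the unique eigenvalue attached to $\chi$, and adapt the proof of Proposition \ref{2prop3} with $\overline\X$ replacing $\X$: the required input---that $\lambda$ does not occur on $\Hc^\bullet(\X_I,\Lambda)$ for any proper $I$---is exactly what was used above to neutralize $D$. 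This should yield that the $(\lambda)$-eigenspace of $F^\delta$ on $\mathrm{H}^i(\overline\X,\Lambda)$ is torsion-free and vanishes outside degree $r$, completing the proof.
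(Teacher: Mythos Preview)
Your overall architecture—pass to the smooth compactification $\overline\X$, kill the boundary contribution, then invoke Poincar\'e duality—matches the paper's. But there is a genuine gap in the middle step, and the final step is both incomplete and unnecessarily indirect.

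\textbf{The gap.} You claim that cuspidality of $L$ forces ${}^*\mathrm{R}_{L_I}^G(P_L)=0$, arguing that a nonzero simple quotient would exhibit $L$ as a composition factor of a properly induced module, ``which contradicts cuspidality of $L$''. That inference is false: cuspidality of $L$ only says that $L$ is not a \emph{submodule or quotient} of an induced module; it does not prevent $L$ from occurring as a \emph{composition factor}. Remark~\ref{2rmk1} makes exactly this point: the cuspidal simple module $S_0$ is a composition factor of $k[G/B]=\mathrm{R}_T^G(k)$. So your deduction does not go through as written.

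The conclusion ${}^*\mathrm{R}_{L_I}^G(P_L)=0$ is nonetheless correct for \emph{this particular} $L$, but for a different reason: the Brauer tree gives $[P_L]=\chi+\chi_{\mathrm{exc}}$, and both $\chi$ and the exceptional characters (Deligne--Lusztig characters attached to the Coxeter torus) are cuspidal, so ${}^*\mathrm{R}_{L_I}^G(P_L)$ is a projective $\Lambda L_I$-module of virtual character zero and hence vanishes. The paper takes a slightly different, and perhaps safer, route: rather than asserting that $P_L$ is cuspidal, it shows directly that $L$ is absent from $\Hc^\bullet(D,k)$ by combining Proposition~\ref{2prop1} (so that $\Hc^\bullet(\X_I,k)$ is the honest $\ell$-reduction of $\Hc^\bullet(\X_I,K)$) with the consequence of \hyperref[2conj1]{(HLM)} that $L$ does not occur in the $\ell$-reduction of any unipotent character of the block other than $\chi$.

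\textbf{The final step.} You propose to work over $\Lambda$ and establish torsion-freeness via the $(\lambda)$-eigenspace, but this remains a sketch and does not obviously close: Proposition~\ref{2prop3} controls $\Hc^\bullet(\X,\Lambda)_{(\lambda)}$, yet says nothing about whether $L$ can appear in the \emph{other} eigenspaces $\Hc^\bullet(\X,k)_{(\mu)}$, which is precisely the question at hand. The paper sidesteps this entirely by working directly over $k$. Once one knows that neither $L$ nor $L^*$ occurs in $\Hc^\bullet(D,k)$ (the dual case follows since $\chi^*$ is again cuspidal), Poincar\'e duality on the smooth projective variety $\overline\X$ yields
\[
\mathrm{Hom}_{kG}\big(\overline P_L,\Hc^i(\X,k)\big)\ \simeq\ \mathrm{Hom}_{kG}\big(\overline P_L,\Hc^i(\overline\X,k)\big)\ \simeq\ \mathrm{Hom}_{kG}\big(\overline P_L,\Hc^{2r-i}(\X,k)^*\big),
\]
and affine vanishing for $\X$ kills the right-hand side for $i>r$. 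This is both shorter and complete; no torsion analysis over $\Lambda$ is needed.
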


\begin{proof} We keep the notation used in the course of the proof of Proposition \ref{2prop3}: $D$ is the complement of $\X$ in the compactification $\overline \X$. Recall that the cohomology of $D$ can be computed in terms of induced characters afforded by the cohomology of smaller Coxeter varieties. Since $\chi$ is cuspidal, we can therefore use the long exact sequences \ref{2eq4} to show that $\chi$ does not occur in $\Hc^\bullet(D,K)$. 

\sk

Denote by $L$  the unique composition factor of the $\ell$-reduction of $\chi$. By Conjecture  \hyperref[2conj1]{(HLM)}, $L$ does not appear in the $\ell$-reduction of any other unipotent character of the block. Therefore, using Proposition \ref{2prop1} and the long exact sequences \ref{2eq4} again, we deduce that $L$ cannot occur as a composition factor of $\Hc^\bullet(D,k)$. In particular, if we denote by $\overline{P}_L \in kG$-$\mathrm{mod}$ the projective cover of $L$ then 

\centers{$ \mathrm{Hom}_{kG}\big(\overline{P}_L, \Hc^i(\X,k)\big) \, \simeq \, \mathrm{Hom}_{kG}\big(\overline{P}_L, \Hc^i(\overline \X,k)\big).$}

\noindent Moreover, since duality preserve cuspidality, the same argument applies for the dual of the cohomology of $D$. Subsequently, using Poincaré duality, we obtain

\centers{$  \mathrm{Hom}_{kG}\big(\overline{P}_L, \Hc^i(\overline \X,k)\big) \, \simeq \,  \mathrm{Hom}_{kG}\big(\overline{P}_L, \Hc^{2r-i}(\overline \X,k)^*\big) \, \simeq \,  \mathrm{Hom}_{kG}\big(\overline{P}_L, \Hc^{2r-i}(\X,k)^*\big).$}

\noindent Now the cohomology of the irreducible affine variety $\X$  vanishes outside the degrees $r,\ldots,2r$. It follows that $ \mathrm{Hom}_{kG}\big(\overline{P}_L, \Hc^i(\X,k)\big) = 0$ whenever $i>r$.
\end{proof}

\subsection{Induced characters\label{3se2}}

We now turn to the case of cuspidal $kG$-modules occurring as composition factors of the $\ell$-reduction of a non-cuspidal unipotent character. If we assume that Conjecture  \hyperref[2conj1]{(HLM)} holds, then by the results of \cite{Du3} the situation corresponds to branches of the following form:

\centers[3]{\begin{pspicture}(9,0.8)
  \psset{linewidth=1pt}

  \cnode[fillstyle=solid,fillcolor=black](0,0){5pt}{A2}
  \cnode(0,0){8pt}{A}
  \cnode(2,0){5pt}{B}
  \cnode(4,0){5pt}{C}
    \cnode(7,0){5pt}{D}
  \cnode(9,0){5pt}{E}

  \ncline[nodesep=0pt]{A}{B}\naput[npos=-0.23]{$\phantom{\bigg(}\chi_{\mathrm{exc}}$}\naput[npos=1.08]{$\phantom{\bigg(}\chi_{m_\zeta}$}\naput{$L$}
  \ncline[nodesep=0pt]{B}{C}
  \ncline[nodesep=0pt,linestyle=dashed]{C}{D}\naput[npos=-0.10]{$\phantom{\bigg(}\chi_{m_\zeta+1}$}\naput[npos=1.05]{$\phantom{\bigg(}\chi_{M_\zeta-1}$}
    \ncline[nodesep=0pt]{D}{E}\naput[npos=1.08]{$\phantom{\bigg(}\chi_{M_\zeta}$}
\end{pspicture}}

\noindent where $m_\zeta < M_\zeta$. 

\sk

At the level of characters, $\chi_{m_\zeta}$ is obtain by inducing a cuspidal unipotent character occurring in the cohomology of a smaller Coxeter variety $\X_I$. The projective cover of $L$ can in turn be obtained by a suitable induction using the following result. 

\begin{lem} Let $I$ be an $F$-stable subset of $\Delta$, and $J$ be its complement. Let $\V_I = \L_I \cap \U$ (resp. $\V_J = \L_J \cap \U$) be the maximal unipotent subgroup of $\L_I$ (resp. $\L_J$) contained in $\U$. Then there is an isomorphism of varieties, compatible with the actions of $F^\delta$ and $V_I \times V_J$

\centers{$ (U_I \cap U_J) \backslash \X \, \simeq \, \X_I \times \X_J$.}

\end{lem}

\begin{proof} For $\alpha \in \Phi^+$ we denote by $\U_\alpha$ the corresponding one-parameter subgroup of $\U$ and we put $\U_\alpha^\sharp = \U_\alpha \smallsetminus \{1\}$. By \cite[Theorem 2.6]{Lu}, there is a $U$-equivariant isomorphism of varieties

\centers{$ \X \, \simeq \, \Big\{ u \in \U \ \big| \ u^{-1}F(u) \in \displaystyle \prod_{\alpha \in I} \U_{\alpha}^\sharp \prod_{\beta \in J} \U_{\beta}^\sharp \Big\}.$}

\noindent The method used in \cite[Proposition 1.2]{Du1} to describe the quotient of $\B$ by the group $D(\U)^F$ extends to any unipotent normal subgroup of $\B$ normalized by $\T$  instead of $D(\U)$ (see \cite[Section 2.3.2]{Du2} for more details). Using the isomorphism  $(\U_I \cap \U_J) \backslash \U \simeq \V_I \times \V_J$, we can therefore realize the quotient of the Deligne-Lusztig variety by $U_I \cap U_J$ as: 

\centers{$ (U_J \cap U_J) \backslash \X \, \simeq \, \left\{ (\bar u, v_1,v_2) \in \U \times \V_I \times \V_J \ \left| \begin{array}{l}  \pi_{\U_I\cap \U_J}(\bar u) = v_1^{-1} F(v_1) v_2^{-1} F(v_2) \\[5pt] 
\bar u \in  \displaystyle \prod_{\alpha \in I} \U_\alpha^\sharp \prod_{\beta \in J} \U_\beta^\sharp 
\end{array} \right.\right\}. $}

\noindent This can be rephrased in the  $V_I\times V_J$-equivariant isomorphism
$V \backslash \X \, \simeq \, \X_{\L_I}(c_I) \times \X_{\L_J} (c_J) \, \simeq \, \X_I \times \X_J.$
\end{proof}

Recall that a \emph{regular} character $V_J$ is any linear character of $V_J$, trivial on $D(\V_J)^F$ such that the induced character on the abelian group $V_J/D(\V_J)^F$ is a product of non-trivial characters (see \cite[Section 2]{BR2} for more details). If $\psi$ is such a character, and $\Lambda_\psi$ is the corresponding one-dimensional $\Lambda V_J$-module we define

\centers{$ \Gamma_I \, = \, \mathrm{Ind}_{U}^G \, \mathrm{Res}_U^{V_I \times V_J} \, (\Lambda V_I \otimes_\Lambda \Lambda_\psi)$}

\noindent where  the restriction is taken through  the map $U \longrightarrow U / U_I\cap U_J \simeq V_I \times V_J$.



\begin{lem} Let $I$ to be the minimal $F$-stable subset of $\Delta$ such that ${}^* \mathrm{R}_{L_I}^G (\chi_{m_\zeta}) \neq 0$. Then $P_L$ is a direct summand of  $\Gamma_I$.
\end{lem}

\begin{proof} Since $\Gamma_I$ is projective, it is enough to show that $\chi_{m_\zeta}$ is the only constituent of $[\Gamma_I]$ in the Harish-Chandra series corresponding to $\zeta$ (cut by the principal block). To this end, we can compute the contribution of $\Gamma_I$ to the cohomology of the Coxeter variety using the previous lemma.

\centers{$ \begin{array}{r@{\ \, \simeq \ \,}l} \mathrm{RHom}_{\Lambda G}^\bullet(\Gamma_I, \Rgc(\X,\Lambda)) & \mathrm{RHom}_{\Lambda U}^\bullet\big( \mathrm{Res}_U^{V_I \times V_J} \, (\Lambda V_I \otimes_\Lambda \Lambda_\psi), \Rgc(\X,\Lambda)\big) \\[6pt]
 & \mathrm{RHom}_{\Lambda (V_I \times V_J) }^\bullet\big( \Lambda V_I \otimes_\Lambda \Lambda_\psi, \Rgc((U_I \cap U_J) \backslash \X,\Lambda)\big) \\[6pt]
 & \Rgc(\X_I,\Lambda) \otimes_\Lambda \mathrm{RHom}_{\Lambda V_J}^\bullet(\Lambda_\psi,\Rgc(\X_J,\Lambda)). \end{array}$}

\noindent From  \cite[Theorem 3.10]{BR2} we deduce that the complex  $\mathrm{RHom}_{\Lambda V_J}^\bullet(\Lambda_\psi,\Rgc(\X_J,\Lambda))$ is quasi-isomorphic to the trivial module shifted in degree $r_J = \dim \X_J$ and we obtain finally
\begin{equation}
\mathrm{RHom}_{\Lambda G}^\bullet(\Gamma_I, \Rgc(\X,\Lambda)) \, \simeq \, \Rgc(\X_I,\Lambda)[-r_J].
\label{contributioneq}
\end{equation}
\indent Tensoring this quasi-isomorphism with $K$ gives the contribution of the character of $\Gamma_I$ to the graded character afforded by the cohomology of $\X$. In particular, a character $\chi$ in the series associated to $\zeta$ and in the principal $\ell$-block occurs in $\Gamma_I$ if and only if the corresponding eigenvalue of $F^\delta$ occurs in $\Hc^\bullet(\X_I,K)$. Since $I$ is chosen to be minimal for $\zeta$, then by  \cite[Theorem 7.1]{Lu} this eigenvalue is necessarily $\zeta q^{\delta \dim \X_I / 2}$ and it occurs in degree $r= \dim \X = \dim \X_I + \dim \X_J$ only. This forces $\chi$ to be the unique character of the series occuring in $\Hc^r(\X,K)$, that is $\chi_{m_\zeta}$.

\sk

Finally, using the shape of the Brauer tree, any projective indecomposable module that has $\chi_{m_\zeta}$ as its only constituent in the Harish-Chandra series associated to $\zeta$ must be the projective cover of $L$. 
\end{proof}

\begin{prop}\label{3cor1}Assume that Conjecture  \hyperref[2conj1]{\emph{(HLM)}} holds. Let $L$ be the unique simple cuspidal $kG$-module that occur in any $\ell$-reduction of $\chi_{m_\zeta}$. Then $L$ occurs as a composition factor of $\Hc^i(\X,k)$ for $i=r$ only.
\end{prop}

\begin{proof} Let $I$ be minimal such that ${}^* \mathrm{R}_{L_I}^G (\chi_{m_\zeta}) \neq 0$.  By  \ref{contributioneq}  and Proposition \ref{2prop1} the cohomology of $\mathrm{RHom}_{\Lambda G}^\bullet\big(\Gamma_I,\Rgc(\X,\Lambda)\big)$ is torsion-free. Since $P_L$ is a direct summand of $\Gamma_I$ the same holds for $P_L$. Consequently, the cohomology of $\mathrm{RHom}_{\Lambda G}^\bullet\big(P_L,\Rgc(\X,\Lambda)\big)$ vanishes outside the degree $r$ since it already does over $K$ by \cite[Proposition 4.3]{Lu}.
\end{proof}

\subsection{Main results\label{3se3}}

We now have all the ingredients for proving that there is no torsion in the cohomology of $\Y_\ell$. Recall that we have shown that the torsion part of the cohomology is necessarily a cuspidal $\Lambda G$-module (see Corollary \ref{2cor2}). By the universal coefficient theorem, it is therefore sufficient to prove that the module $\Hc^i(\Y_\ell,k)$ has no cuspidal composition factors if $i > r$. By Corollary \ref{2cor3}, this property holds whenever it holds for the Deligne-Lusztig variety $\X$. In the framework of derived categories, we are then reduced to show that for any cuspidal $kG$-module $L$ lying in the block, the complex

\centers{$ \mathrm{RHom}_{\Lambda G}^\bullet \big(\overline{P}_L, \Rgc(\X,k)\big) $}

\noindent is quasi-isomorphic to a complex concentrated in degree $r$. If $L$ happens to be a composition factor of the $\ell$-reduction of a cuspidal unipotent character, then by Proposition \ref{3prop1} it cannot occur outside the cohomology in middle degree. Corollary \ref{3cor1} deals with the case where $L$ is a composition factor of the $\ell$-reduction of an induced character, but we have to assume that Conjecture  \hyperref[2conj1]{(HLM)} holds.

\begin{thm}\label{3thm1}Let $\G$ be a quasi-simple group. Assume that Conjecture  \hyperref[2conj1]{(HLM)} holds. Then in the set-up of Section \ref{2se1}, the $\Lambda$-modules $b\Hc^i(\Y_\ell,\Lambda)$ are torsion-free.
\end{thm}

We deduce from \cite[Theorem 4.12]{Du3} the geometric version of Brou\'e's conjecture holds for any quasi-simple group except $E_7$ and $E_8$. This extends significantly the previous results of Puig \cite{Pu2} (for $\ell \, | \, q-1$), Rouquier \cite{Rou} (for $\ell \, | \, \phi_h(q)$ and $r=1$) and Bonnaf\'e-Rouquier \cite{BR2} (for $\ell \, |\, \phi_h(q)$ and $(\G,F)$ of type A$_n$).

\begin{thm}\label{3thm2}Let $(\G,F)$ be a quasi-simple group different from $E_7$ and $E_8$ and $c$ be a Coxeter element of $(W,F)$. Let  $\ell$ be a prime number not dividing the order of $W^F$ and satisfying one of the two following assumptions, depending on the type of $(\G,F)$: 

\begin{itemize}
 
 \item[$\bullet$] "non-twisted" cases: $\ell$ divides $\Phi_h(q)$; 

 \item[$\bullet$] "twisted" cases: $\ell$ divides the order of $T_c$.
 
\end{itemize}

\noindent Then the complex $b\Rgc(\Y(\dot c),\Lambda)b'$ induces a splendid and perverse equivalence between the principal $\ell$-blocks $b\Lambda G$ and $b' \Lambda N_G(T_c)$
\end{thm}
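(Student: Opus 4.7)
The plan is to reduce the statement to the main result of \cite{Du3}, which asserts that the geometric version of Brou\'e's conjecture holds whenever assumption $\mathrm{(S)}$ (with $b'$) is valid and Conjecture \hyperref[2conj1]{(HLM)} is known. Thus the proof has two ingredients: checking $\mathrm{(S)}$ and checking the shape of the Brauer tree, both of which have been prepared in the previous sections.

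\textbf{Step 1: Verify assumption $\mathrm{(S)}$.} Under the hypotheses on $(\G,F)$ and $\ell$, Theorem \ref{3thm1} applies and gives us that the $\Lambda$-modules $b\Hc^i(\Y_\ell,\Lambda)$ are torsion-free. By the very definition of $\Y_\ell = \Y/T_{\ell'}$ and the isomorphism
\centers{$\Rgc(\Y,\Lambda)b' \, \simeq \, \Rgc(\Y_\ell,\Lambda)$}
discussed at the beginning of Section \ref{2se3}, this is exactly the statement that $b\Hc^i(\Y(\dot c),\Lambda)b'$ is torsion-free for all $i$, which is the version of $\mathrm{(S)}$ that feeds into the machinery of \cite{Du3}.

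\textbf{Step 2: Verify the conclusion of Conjecture \hyperref[2conj1]{(HLM)}.} In all quasi-simple types other than E$_7$ and E$_8$ the planar-embedded Brauer tree of the principal $\ell$-block in the Coxeter case is already known by the case-by-case work reviewed in the introduction, so the conjecture holds. In types E$_7$ and E$_8$ the hypotheses of Theorem \ref{3thm1} explicitly include Conjecture \hyperref[2conj1]{(HLM)}, so it holds by assumption in these cases as well.

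\textbf{Step 3: Apply \cite[Theorem 4.12]{Du3}.} With $\mathrm{(S)}$ valid (in the $b'$-form) and the Brauer tree known, the results of \cite{Du3} produce the derived equivalence induced by $b\Rgc(\Y(\dot c),\Lambda)b'$ between $b\Lambda G$ and $b'\Lambda N_G(T_c)$, and moreover show that this equivalence is splendid in the sense of Rickard (the summand involved is a bounded complex of $p$-permutation modules since $\Y(\dot c)$ is a smooth $G\times (T^{cF})^{\mathrm{op}}$-variety) and perverse (the cohomology is concentrated in the expected degrees dictated by the Brauer tree, an output of the torsion-freeness).

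The only genuine obstacle in this argument is Step 1, which requires the full force of Sections \ref{2se} and \ref{3se}: the reduction of $\mathrm{(S)}$ to a cuspidality statement via Corollaries \ref{2cor2} and \ref{2cor3}, and the control of cuspidal composition factors in $\Hc^\bullet(\X,k)$ through Proposition \ref{3prop1} and Corollary \ref{3cor1}. Once Step 1 is in hand, Steps 2 and 3 are formal invocations of prior results.
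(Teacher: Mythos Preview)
Your proposal is correct and follows essentially the same route as the paper, which simply deduces Theorem \ref{3thm2} from \cite[Theorem 4.12]{Du3} once Theorem \ref{3thm1} provides the torsion-freeness of $b\Hc^i(\Y_\ell,\Lambda)$. One small inaccuracy: in Step~2 you refer to the \emph{planar-embedded} Brauer tree being known, but what is actually needed (and what Conjecture \hyperref[2conj1]{(HLM)} asserts) is only the shape of the tree; the planar embedding is a separate, stronger statement dealt with in Theorem \ref{3thm3}.
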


Using \cite[Theorem 4.14]{Du3} we also deduce the planar embedding of the Brauer tree of the principal $\ell$-block for groups of type ${}^2$G$_2$, ${}^2$F$_4$ and F$_4$ (compare with \cite{Hiss} and \cite{HL}).

\begin{thm}\label{3thm3}$\mathrm{(i)}$ Assume that $\ell$ is odd and divides $q^2-q\sqrt3 +1$. Then the planar embedded Brauer tree of the principal $\ell$-block of the Ree group ${}^2$G$_2(q^2)$ is

\centers[3]{\begin{pspicture}(6,4.5)

  \psset{linewidth=1pt}

  \cnode[fillstyle=solid,fillcolor=black](2,2){5pt}{A2}
  \cnode(2,2){8pt}{A}
  \cnode(4,2){5pt}{B}
  \cnode(6,2){5pt}{C}
  \cnode(2.62,3.9){5pt}{D}
  \cnode(2.62,0.1){5pt}{E}
  \cnode(0.38,3.18){5pt}{F}
  \cnode(0.38,0.82){5pt}{G}

  \ncline[nodesep=0pt]{A}{B}\naput[npos=1.1]{$\vphantom{\Big(} \mathrm{St}_G$}
  \ncline[nodesep=0pt]{B}{C}\naput[npos=1.1]{$\vphantom{\Big(} \mathrm{1}_G$}
  \ncline[nodesep=0pt]{A}{D}\ncput[npos=1.5]{${}^2\text{G}_2[\mathrm{i}]$}
  \ncline[nodesep=0pt]{A}{E}\ncput[npos=1.5]{${}^2\text{G}_2[-\mathrm{i}]$}
  \ncline[nodesep=0pt]{A}{F}\ncput[npos=1.65]{${}^2\text{G}_2[\xi]$}
  \ncline[nodesep=0pt]{A}{G}\ncput[npos=1.65]{${}^2\text{G}_2[\overline{\xi}]$}
  \psellipticarc[linewidth=1.5pt]{->}(2,2)(1,1){15}{60}
  \psellipticarc[linewidth=1.5pt]{->}(2,2)(1,1){87}{132}
  \psellipticarc[linewidth=1.5pt]{->}(2,2)(1,1){159}{204}
  \psellipticarc[linewidth=1.5pt]{->}(2,2)(1,1){231}{276}
  \psellipticarc[linewidth=1.5pt]{->}(2,2)(1,1){303}{348}

\end{pspicture}}

\bk

\noindent where $\mathrm{i} = \xi^3$ and $\xi$ is the unique $12$-th root of unity in $\Lambda^\times$ congruent to $q^5$ modulo $\ell$.

\mk

\noindent $\mathrm{(ii)}$ Assume that $\ell$ divides $q^4-\sqrt 2 q^3+q^2-\sqrt 2q+1$. Then the planar embedded Brauer tree of the principal $\ell$-block of the simple group of type ${}^2$F$_4(q)$ is

\centers[4]{\begin{pspicture}(6,4.5)

  \cnode[fillstyle=solid,fillcolor=black](1.5,2){5pt}{A2}
    \cnode(1.5,2){8pt}{A}
  \cnode(3,2){5pt}{B}
  \cnode(4.5,2){5pt}{C}
  \cnode(6,2){5pt}{D}
  \cnode(0,2){5pt}{O}
  \cnode(2.56,3.06){5pt}{P}
  \cnode(1.5,3.5){5pt}{Q}
  \cnode(0.44,3.06){5pt}{R}  \cnode(-.62,4.12){5pt}{R2}
  \cnode(2.56,0.94){5pt}{S}
  \cnode(1.5,0.5){5pt}{T}
  \cnode(0.44,0.94){5pt}{U}\cnode(-.62,-0.12){5pt}{U2}

  \ncline[nodesep=0pt]{A}{B}\naput[npos=1.1]{$\vphantom{\big(_p} \mathrm{St}_G$}
  \ncline[nodesep=0pt]{B}{C}\naput[npos=1.1]{$\vphantom{\Big(}\phi_{2,1}$}
  \ncline[nodesep=0pt]{C}{D}\naput[npos=1.15]{$\vphantom{\big(_p}  1_G$}
  \ncline[nodesep=0pt]{O}{A}\ncput[npos=-1.2]{$\vphantom{\Big(} {}^2F_{4}^{\mathrm{II}}[-1] $}
 \ncline[nodesep=0pt]{A}{P} \ncput[npos=1.8]{${}^2 F_4[-\theta^2]$}
 \ncline[nodesep=0pt]{A}{Q}\ncput[npos=1.7]{${}^2 F_4^{\mathrm{II}}[\mathrm{i}]$}
 \ncline[nodesep=0pt]{A}{R}\ncput[npos=1.2]{${}^2 B_2[\eta^3]_\varepsilon$ \hphantom{aaaiaaaaaa}} 
  \ncline[nodesep=0pt]{R}{R2}\ncput[npos=1.2]{${}^2 B_2[\eta^3]_1$ \hphantom{aaaaaaaaa}} 

 \ncline[nodesep=0pt]{A}{S}\ncput[npos=1.8]{${}^2 F_4[-\theta]$}
 \ncline[nodesep=0pt]{A}{T}\ncput[npos=1.7]{${}^2 F_4^{\mathrm{II}}[-\mathrm{i}]$}
 \ncline[nodesep=0pt]{A}{U}\ncput[npos=1.2]{${}^2 B_2[\eta^5]_\varepsilon$  \hphantom{aaaaaaaaaa}} 
  \ncline[nodesep=0pt]{U}{U2}\ncput[npos=1.2]{${}^2 B_2[\eta^5]_1$  \hphantom{aaaaiaaaa}}

\end{pspicture}}

\noindent where $\theta$ (resp. $\mathrm{i}$, resp. $\eta$) the unique primitive $3$-rd  (resp. $4$-th, resp. $8$-th) root of unity in $\Lambda^\times$  congruent to $q^8$ (resp. $q^6$, resp. $q^3$) modulo $\ell$.

\mk

\noindent $\mathrm{(iii)}$ Assume that $\ell \neq 2,3$ and divides $q^4-q^2 +1$. Then the planar embedded Brauer tree of the principal $\ell$-block of the simple group of type F$_4(q)$ is

\centers[3]{\begin{pspicture}(14.1,4)

  \psset{linewidth=1pt}

  \cnode[fillstyle=solid,fillcolor=black](5.25,2){5pt}{A2}
  \cnode(5.25,2){8pt}{A}
  \cnode(7,2){5pt}{B}
  \cnode(8.75,2){5pt}{C}
  \cnode(10.5,2){5pt}{D}
  \cnode(12.25,2){5pt}{E}
  \cnode(14,2){5pt}{F}
  \cnode(3.5,2){5pt}{G}
  \cnode(1.75,2){5pt}{H}
  \cnode(0,2){5pt}{I}
  \cnode(6.125,3.515){5pt}{M}
  \cnode(6.125,0.485){5pt}{N}
  \cnode(4.375,3.515){5pt}{O}
  \cnode(4.375,0.485){5pt}{P}

  \ncline[nodesep=0pt]{A}{B}
  \ncline[nodesep=0pt]{B}{C}\naput[npos=-0.1]{$\vphantom{\Big(} \mathrm{St}_G$}\naput[npos=1.1]{$\vphantom{\Big(} \phi_{4,13}$}
  \ncline[nodesep=0pt]{C}{D}
  \ncline[nodesep=0pt]{D}{E}\naput[npos=-0.1]{$\vphantom{\Big(} \phi_{6,6}''$}\naput[npos=1.1]{$\vphantom{\Big(} \phi_{4,1}$}
  \ncline[nodesep=0pt]{E}{F}\naput[npos=1.1]{$\vphantom{\Big(} 1_G$}  
  \ncline[nodesep=0pt]{I}{H}\naput[npos=-0.1]{$\vphantom{\Big(} B_{2,1}$}\naput[npos=1.1]{$\vphantom{\Big(} B_{2,r}$}
  \ncline[nodesep=0pt]{H}{G}
  \ncline[nodesep=0pt]{G}{A}\naput[npos=-0.1]{$\vphantom{\Big(} B_{2,\varepsilon}$}
  \ncline[nodesep=0pt]{M}{A}\ncput[npos=-0.6]{$F_4[\mathrm{i}]$}
  \ncline[nodesep=0pt]{N}{A}\ncput[npos=-0.6]{$F_4[-\mathrm{i}]$}
  \ncline[nodesep=0pt]{O}{A}\ncput[npos=-0.6]{$F_4[\theta]$}
  \ncline[nodesep=0pt]{P}{A}\ncput[npos=-0.6]{$F_4[\theta^2]$}

  \psellipticarc[linewidth=1.5pt]{->}(5.25,2)(1,1){10}{50}
  \psellipticarc[linewidth=1.5pt]{->}(5.25,2)(1,1){70}{110}
  \psellipticarc[linewidth=1.5pt]{->}(5.25,2)(1,1){130}{170}
  \psellipticarc[linewidth=1.5pt]{->}(5.25,2)(1,1){190}{230}
  \psellipticarc[linewidth=1.5pt]{->}(5.25,2)(1,1){250}{290}
  \psellipticarc[linewidth=1.5pt]{->}(5.25,2)(1,1){310}{350}

\end{pspicture}}

\noindent where $\theta$ (resp. $\mathrm{i}$) is the unique third (resp. fourth) root of unity in $\Lambda^\times$ congruent to $q^{4}$ (resp. $q^3$) modulo $\ell$.

\end{thm}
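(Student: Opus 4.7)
The plan is to reduce Theorem \ref{3thm3} to a direct combination of Theorem \ref{3thm1} with \cite[Theorem 4.14]{Du3}. The latter asserts that once the torsion-freeness assumption $\mathrm{(S)}$ holds with the block idempotent $b'$, the planar embedding of the Brauer tree of $b\Lambda G$ can be read off from the cyclic ordering in $\Lambda^\times$ of the minimal Frobenius eigenvalues on $\Hc^\bullet(\X,\Lambda)$, together with the Harish-Chandra series structure of the unipotent characters in the principal block.

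First, I would verify that the hypotheses of Theorem \ref{3thm1} are met. For the Ree group ${}^2$G$_2(q^2)$ no restriction on $p$ is imposed, while for F$_4(q)$ the bad primes are exactly $2$ and $3$, matching the assumption $p\neq 2,3$. Hence the $\Lambda$-modules $b\Hc^i(\Y,\Lambda)b'$ are torsion-free in both cases, and Theorem \ref{3thm2} supplies the geometric version of Brou\'e's conjecture.

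I would then invoke \cite[Theorem 4.14]{Du3} to translate this equivalence into the planar-embedded Brauer tree: the characters $\chi_{m_\zeta}$ for the various Harish-Chandra series $\zeta$ are the vertices adjacent to the exceptional node, and the cyclic order around that node is dictated by the cyclic order of the residues of the eigenvalues $q^{m_\zeta \delta}$ modulo $\ell$. The concrete work is then to enumerate, in each group, the Harish-Chandra series intersecting the principal $\ell$-block, together with their minimal Frobenius eigenvalues. For ${}^2$G$_2(q^2)$, the principal series contributes the branch through $1_G$ and $\mathrm{St}_G$, while each of the four cuspidal unipotent characters ${}^2\mathrm{G}_2[\pm\xi]$, ${}^2\mathrm{G}_2[\pm\mathrm{i}]$ gives a length-one branch. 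For F$_4(q)$, the principal series yields the five-edge branch containing $1_G,\phi_{4,1},\phi''_{6,6},\phi_{4,13},\mathrm{St}_G$; the cuspidal Levi of type B$_2$ contributes the length-three branch through $B_{2,1},B_{2,r},B_{2,\varepsilon}$; and the four truly cuspidal unipotent characters $F_4[\pm\mathrm{i}]$, $F_4[\theta^{\pm1}]$ give four single-edge branches.

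The main obstacle is not conceptual but combinatorial: one must identify each minimal Frobenius eigenvalue from the known $\zeta$-labels of the cuspidal unipotent characters using Lusztig's classification, reduce these eigenvalues modulo $\ell$ using the congruences $q\equiv \xi \pmod \ell$ in the Ree case and $q^3\equiv \mathrm{i}$, $q^4\equiv \theta \pmod \ell$ in the F$_4$ case, and check that the resulting cyclic order matches the drawings in the statement. The identity $\mathrm{i}=\xi^3$ makes the Ree arrangement transparent, while the F$_4$ case reduces to comparing the cyclic order of the twelve $12$th roots of unity with the arrangement of branches depicted.
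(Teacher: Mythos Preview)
Your proposal is correct and follows exactly the route the paper takes: the paper does not give a separate proof of Theorem~\ref{3thm3} but simply states (in the sentence preceding it) that it follows from Theorem~\ref{3thm1} together with \cite[Theorem~4.14]{Du3}, and refers to \cite{Hiss} and \cite{HL} for comparison. Your outline in fact supplies more of the combinatorial bookkeeping (enumerating the Harish-Chandra series and checking the cyclic order of the reduced Frobenius eigenvalues) than the paper itself spells out.
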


\begin{center} {\subsection*{Acknowledgements}} \end{center}

The author wishes to thank Meinolf Geck and George Lusztig for useful correspondence and C\'edric Bonnaf\'e, Jean-Fran\c{c}ois Dat and Rapha\"el Rouquier for many valuable comments and suggestions.

\bk

\bibliographystyle{abbrv}
\bibliography{coxorbits2}

\end{document}